\documentclass{amsart}[11pt]
 \usepackage{amsmath}
\usepackage{amsfonts}
\usepackage{amssymb}
\usepackage{float}
\usepackage{url}


\usepackage{graphicx}
\usepackage{fullpage}
\usepackage{color}
\usepackage{epsfig}




\newtheorem{lemma}{Lemma}

\newtheorem{theorem}{Theorem}
\newtheorem{example}{Example}[section]
\newtheorem{remark}{Remark}[section]

\begin{document}

\title{\bf  Single cut and multicut SDDP with cut selection for multistage stochastic linear programs: convergence proof and numerical experiments}

\maketitle

\vspace*{0.5cm}

\begin{center}
\begin{tabular}{cc}
\begin{tabular}{c}
Vincent Guigues (corresponding author)\\
School of Applied Mathematics, FGV\\
Praia de Botafogo, Rio de Janeiro, Brazil\\ 
{\tt vguigues@fgv.br}\\
\end{tabular}
\begin{tabular}{c}
Michelle Bandarra\\
School of Applied Mathematics, FGV\\
Praia de Botafogo, Rio de Janeiro, Brazil\\ 
{\tt michelle.bandarra@mirow.com.br}\\
\end{tabular}
\end{tabular}
\end{center}

\vspace*{0.5cm}





\date{}

\begin{abstract}
We introduce a variant of Multicut Decomposition Algorithms (MuDA), called CuSMuDA (Cut Selection for Multicut Decomposition Algorithms), for solving 
multistage stochastic linear programs that incorporates a class of  cut selection
strategies to choose the most relevant cuts of the approximate recourse functions.
This class contains Level 1 \cite{dpcuts0} and Limited Memory Level 1 \cite{guiguesejor2017} cut selection strategies, initially
introduced for respectively Stochastic Dual Dynamic Programming (SDDP) and Dual Dynamic Programming (DDP).
We prove the almost sure convergence of the method in a finite number
of iterations and obtain as a by-product the almost sure convergence in a finite number of iterations of SDDP combined with our class of cut selection strategies.

We compare the performance of MuDA, SDDP, and their variants with cut selection (using Level 1 and Limited Memory Level 1) on
several instances of a portfolio problem and of an inventory problem.
On these experiments, in general, SDDP is quicker (i.e., satisfies the stopping criterion quicker) than 
MuDA and cut selection allows us to decrease the computational bulk with Limited Memory Level 1 being more efficient (sometimes
much more) than Level 1.\\
\end{abstract}

\par {\textbf{Keywords:}} Stochastic Programming; Stochastic Dual Dynamic Programming; Multicut Decomposition Algorithm; Portfolio selection; Inventory management.\\

\par {\textbf{AMS subject classifications:}} 90C15, 91B30.

\section{Introduction}

Multistage stochastic optimization problems are common in many areas of engineering and in finance.
However, solving these problems is challenging and in general requires decomposition techniques.
Two popular decomposition methods are Approximate Dynamic Programming (ADP) \cite{powellbook}
and sampling-based variants of the Nested Decomposition (ND) algorithm \cite{birgemulti, birge-louv-book}
and of the Multicut Nested Decomposition (MuND) algorithm \cite{birgelouv, gassmanmp90}.
The introduction of sampling within ND was proposed by \cite{pereira} and the corresponding
method is usually called Stochastic Dual Dynamic Programming (SDDP).
Several enhancements and extensions of SDDP have been proposed such as 
CUPPS \cite{chenpowell99}, ReSA \cite{resa}, AND \cite{birgedono}, DOASA \cite{philpot},
risk-averse variants in \cite{guiguesrom10, guiguesrom12, philpmatos, guiguescoap2013, shaptekaya, kozmikmorton},
cut formulas for nonlinear problems in \cite{guiguessiopt2016}, regularizations
in \cite{powellasamov} for linear problems
and SDDP-REG in \cite{guilejtekregsddp} for nonlinear problems.  
Convergence of these methods was proved in \cite{philpot} for linear problems,
in \cite{lecphilgirar12} for nonlinear risk-neutral problems, and in \cite{guiguessiopt2016} for risk-averse nonlinear problems. 
All these algorithms compute lower approximations of the cost-to-go functions expressed as a supremum of affine functions called
optimality cuts. Typically, at each iteration, a fixed  number of cuts is added for each cost-to-go function.
Therefore, techniques to reduce the number of cuts in each subproblem (referred to as cut selection or cut pruning)
may be helpful to speed up the convergence  of these methods.
In stochastic optimization, the problem of cut selection for lower approximations of the cost-to-go functions
associated to each node of the scenario tree was discussed for the first time in \cite{rusparallel93} where only
the active cuts are selected.
Pruning strategies of basis (quadratic) functions have been proposed in \cite{gaubenezheng} and 
\cite{mcdesgaube} for max-plus based approximation methods which, similarly to SDDP, approximate the cost-to-go functions of a nonlinear
optimal control problem by a supremum of basis functions. More precisely, in 
\cite{gaubenezheng}, a fixed number of cuts is eliminated and cut selection is done solving a combinatorial optimization problem.
For SDDP, in \cite{shaptekaya} it is suggested at some iterations to eliminate redundant cuts
(a cut is redundant if it is never active in describing the lower approximate cost-to-go function).
This procedure is called test of usefulness in \cite{pfeifferetalcuts}.
This requires solving at each stage as many linear programs as there are cuts. 
In \cite{pfeifferetalcuts} and \cite{dpcuts0}, only the cuts that have the largest value for at least
one of the trial points computed are considered relevant. 
This 
strategy is called the Territory algorithm in \cite{pfeifferetalcuts}
and Level 1 cut selection in \cite{dpcuts0}. It was presented for the first time
in 2007 at the ROADEF congress by David Game and Guillaume Le Roy (GDF-Suez), see \cite{pfeifferetalcuts}.

Sampling can also be incorporated into  MuND algorithm (see for instance \cite{zhang2016}).
This algorithm builds many more cuts  than SDDP per iteration and therefore each iteration takes
more time but less iterations are in general needed to satisfy some stopping criterion.
Therefore, cut selection strategies could also be useful. However, to the best of our knowledge,
the combination of multicut decomposition methods 
with cut selection strategies, refereed to as CuSMuDA (Cut Selection for Multicut Decomposition Algorithms) in the sequel, has not been proposed so far. 
In this context, the objectives and contributions  of this paper are the following:
\begin{itemize}
\item[(A)] we propose  cut selection strategies that are more efficient than the 
aforementioned ones. More precisely, instead of selecting all the cuts that are the highest at the trial points, we introduce a set of selectors
that select some subset of these cuts. The selectors have to satisfy an assumption (Assumption (H3), see Section \ref{sec:cutselectionalgo})
to ensure the convergence of SDDP and MuDA combined with these cut selection strategies. 
We obtain a family of cut selection strategies; a given strategy corresponding to a choice of selectors along the iterations.
In this family, the most economic (in terms of memory) cut selection strategy satisfying (H3)
is the  Limited Memory Level 1 (LML 1) strategy  which selects at each trial point only one cut, namely the oldest cut. 
This strategy was introduced in 
\cite{guiguesejor2017} in the context of Dual Dynamic Programming but can be straightforwardly applied to SDDP and MuDA. 
The least economic strategy, i.e., the one that keeps the largest amount of cuts, is Level 1. ``Between" these two strategies, using the flexibility offered 
by the selectors (as long as Assumption (H3) is satisfied by these selectors), we obtain
a (large) family of cut selection strategies.
\item[(B)] We introduce and describe CuSMuDA, a combination of MuDA with our family of cut selection strategies.
\item[(C)] We prove the almost sure convergence of CuSMuDA in a finite number of iterations.
This proof extends the theory in \cite{guiguesejor2017} in two aspects:
(i) first the stochastic case is considered, i.e., SDDP and multicut SDDP are considered whereas the deterministic case, i.e., DDP, was considered in \cite{guiguesejor2017} and second
(ii) more general cut selection strategies are dealt with. Item (ii) requires an additional technical discussion, see Lemma \ref{lemmafinitecuts}.
\item[(D)] We present the results of numerical experiments comparing the performance of six solution methods 
on several instances of a portfolio problem and of an inventory problem.
These six solution methods are SDDP, SDDP with Level 1 cut selection, SDDP with LML 1 cut selection, MuDA,
CuSMuDA with Level 1 cut selection, and CuSMuDA with LML 1 cut selection.
To the best of our knowledge, these are the first numerical experiments on SDDP with LML 1 cut selection
and the first experiments on multicut SDDP combined with cut selection.
The main conclusions of these experiments are the following: 
\begin{itemize}
\item in general, for a given instance, CuSMuDA with LML 1 cut selection (resp. SDDP combined with LML 1
cut selection) is more efficient (i.e., allows us to satisfy
more quickly the stopping criterion) than CuSMuDA with Level 1 cut selection (resp. SDDP combined with 
Level 1 cut selection), itself much more efficient than MuDA (resp. SDDP).
Typically, variants with cut selection require more iterations but iterations are quicker
with very few cuts selected for the first stages. However, on some instances, CuSMuDA with Level 1
cut selection still selected a large proportion of cuts for all stages and was less efficient than
both MuDA and CuSMuDA with LML 1 cut selection.
\item MuDA (resp. CuSMuDA) in general requires much more computational bulk than SDDP (resp. SDDP with cut selection).
However, on some instances, CuSMuDA with LML 1 cut selection and SDDP combined with LML 1 cut selection
have shown similar performances.
We also expect CuSMuDA to be more efficient than SDDP with cut selection 
when MuDA is already more efficient than SDDP.
Even if this is not often the case, the results of numerical experiments
on several instances of multistage stochastic linear programs where MuDA is quicker than SDDP are
reported in \cite{birge1996}.
\end{itemize}
\end{itemize}

The outline of the study is as follows. The class of problems considered and assumptions are discussed in Section \ref{classassumptions}.
In Subsection \ref{msda} we recall sampling-based MuND while in Subsection \ref{sec:cutselectionalgo} CuSMuDA is described.
In Section \ref{sec:convanalysis}, we  prove Theorem \ref{convproof} which states that CuSMuDA converges almost surely in a finite number of iterations
to an optimal policy.
As a by-product, we obtain the almost sure convergence of SDDP combined with our class of cut selection strategies (in particular Level 1 
and LML 1) in a finite number of iterations.
Finally, numerical experiments are presented in Section \ref{sec:numsim}.

Throughout the paper, the usual scalar product in $\mathbb{R}^n$ is denoted by $\langle x, y\rangle = x^\top y$ for $x, y \in \mathbb{R}^n$.
%
%
%
%
%
%
%
%
%

\section{Problem formulation and assumptions}\label{classassumptions}

We are interested in solution methods for linear stochastic dynamic programming equations:
the first stage problem is 
\begin{equation}\label{firststodp}
\mathcal{Q}_1( x_0 ) = \left\{
\begin{array}{l}
\inf_{x_1 \in \mathbb{R}^n} \langle  c_1 ,  x_1 \rangle + \mathcal{Q}_2 ( x_1 )\\
A_{1} x_{1} + B_{1} x_{0} = b_1,
x_1 \geq 0
\end{array}
\right.
\end{equation}
for $x_0$ given and for $t=2,\ldots,T$, $\mathcal{Q}_t( x_{t-1} )= \mathbb{E}_{\xi_t}[ \mathfrak{Q}_t ( x_{t-1}, \xi_{t}  )  ]$ with
\begin{equation}\label{secondstodp}
\mathfrak{Q}_t ( x_{t-1}, \xi_{t}  ) = 
\left\{ 
\begin{array}{l}
\inf_{x_t \in \mathbb{R}^n} \langle c_t ,  x_t \rangle + \mathcal{Q}_{t+1} ( x_t )\\
A_{t} x_{t} + B_{t} x_{t-1} = b_t,
x_t \geq 0,
\end{array}
\right.
\end{equation}
with the convention that $\mathcal{Q}_{T+1}$ is null and
where for $t=2,\ldots,T$, random vector $\xi_t$ corresponds to the concatenation of the elements in random matrices $A_t, B_t$ which have a known
finite number of rows and random vectors $b_t, c_t$
(it is assumed that $\xi_1$ is not random). For convenience, we will denote 
$$
X_t(x_{t-1}, \xi_t):=\{x_t \in \mathbb{R}^n : A_{t} x_{t} + B_{t} x_{t-1} = b_t, \,x_t \geq 0 \}.
$$
We make the following assumptions:
\begin{itemize}
\item[(H1)] The random vectors $\xi_2, \ldots, \xi_T$ are independent and have discrete distributions with finite support. 
\item[(H2)] The set $X_1(x_{0}, \xi_1 )$ is nonempty and bounded and for every $x_1 \in X_1(x_{0}, \xi_1 )$,
for every $t=2,\ldots,T$, for every realization $\tilde \xi_2, \ldots, \tilde\xi_t$ of $\xi_2,\ldots,\xi_t$,
for every $x_{\tau} \in X_{\tau}( x_{\tau-1} , \tilde \xi_{\tau}), \tau=2,\ldots,t-1$, the set $X_t( x_{t-1} , {\tilde \xi}_t )$
is nonempty and bounded.
\end{itemize}
We will denote by $\Theta_t = \{\xi_{t 1},\ldots,\xi_{t M_t} \}$ the support  of $\xi_t$ for stage $t$ with
$p_{t i}= \mathbb{P}(\xi_t = \xi_{t i}) >0, i=1,\ldots,M_t$ and with vector $\xi_{t j}$ being the concatenation
of the elements in $A_{t j}, B_{t j}, b_{t j}, c_{t j}$.

\section{Algorithms}

\subsection{Multicut stochastic decomposition}\label{msda}

The multicut stochastic  decomposition method approximates the function  $\mathfrak{Q}_t(\cdot, \xi_{t j})$ at iteration $k$ for $t=2,\ldots,T$, $j=1,\ldots,M_t$, 
 by a piecewise affine lower bounding function $\mathfrak{Q}_t^k(\cdot, \xi_{t j})$ 
which is a maximum of  $k$ affine functions $\mathcal{C}_{t j}^i$ called cuts:
$$
\mathfrak{Q}_t^k(  x_{t-1}, \xi_{t j}) = \max_{1 \leq i \leq k} \mathcal{C}_{t j}^i( x_{t-1}  ) \mbox{ with }\mathcal{C}_{t j}^i (x_{t-1})=\theta_{t j}^i + \langle \beta_{t j}^i , x_{t-1} \rangle
$$
where coefficients $\theta_{t j}^i, \beta_{t j}^i$ are computed as explained below. These approximations
provide the lower bounding functions
\begin{equation}\label{approxQt}
\mathcal{Q}_{t}^k ( x_{t-1} )=  \sum_{j=1}^{M_t} p_{t j} \mathfrak{Q}_{t}^k (x_{t-1} , \xi_{t j})
\end{equation}
for $\mathcal{Q}_{t}$. Since $\mathcal{Q}_{T+1}$ is the null function, we will also define
$\mathcal{Q}_{T+1}^k \equiv 0$. The steps of MuDA are described below.\\

\par {\textbf{Step 1: Initialization.}} For $t=2,\ldots,T$, $j=1,\ldots,M_t$, take for $\mathfrak{Q}_t^0(\cdot, \xi_{t j})$ 
a known lower bounding affine function for  $\mathfrak{Q}_t(\cdot, \xi_{t j})$.
Set the iteration count $k$ to 1 and $\mathcal{Q}_{T+1}^0 \equiv 0$.\\
\par {\textbf{Step 2: Forward pass.}} We generate a sample
${\tilde \xi}^k = (\tilde \xi_1^k, \tilde \xi_2^k,\ldots,\tilde \xi_T^k)$ from the distribution of $(\xi_1,\xi_2,\ldots,\xi_T)$,
with the convention that $\tilde \xi_1^k = \xi_1$ (here and in what follows, the tilde symbol will be used to represent realizations of random variables:
for random variable $\xi$, $\tilde \xi$ is a realization of $\xi$). Using approximation $\mathfrak{Q}_t^{k-1}(\cdot, \xi_{t j})$
of $\mathfrak{Q}_t(\cdot, \xi_{t j})$  (computed at previous iterations), we solve the problem
\begin{equation}\label{pbforwardpass}
\left\{
\begin{array}{l}
\inf_{x_t \in \mathbb{R}^n} \langle  x_t ,  {\tilde c}_t^k \rangle  + \mathcal{Q}_{t+1}^{k-1} ( x_t )\\
x_t \in X_t(x_{t-1}^k, {\tilde \xi}_{t}^k )
\end{array}
\right.
\end{equation}
for $t=1,\ldots,T$,
where $x_0^k=x_0$ and  $\mathcal{Q}_{t+1}^{k-1}$ is given by \eqref{approxQt} with $k$ replaced by $k-1$. Let $x_t^k$ be an optimal solution of the problem.\\
\par {\textbf{Step 3: Backward pass.}} 
The backward pass builds cuts for $\mathfrak{Q}_t(\cdot, \xi_{t j})$ at $x_{t-1}^k$ computed in the forward pass.
For $k \geq 1$ and $t=1,\ldots,T$, we introduce
the function ${\underline{\mathfrak{Q}}}_t^k : \mathbb{R}^n {\small{\times}} \Theta_t \rightarrow \mathbb{R}$ given by
\begin{equation}\label{backwardt0}
{\underline{\mathfrak{Q}}}_t^k (x_{t-1} , \xi_t   ) =  
\left\{
\begin{array}{l}
\inf_{x_t \in \mathbb{R}^n} \langle c_t ,  x_t \rangle + \mathcal{Q}_{t+1}^k ( x_t )\\
x_t \in X_t(x_{t-1}, \xi_{t} ),
\end{array}
\right.
\end{equation}
with the convention that $\Theta_1 = \{\xi_1\}$, and
we set $\mathcal{Q}_{T+1}^k \equiv 0$. For $j=1,\ldots,M_T$, we solve the problem
\begin{equation}\label{backwardT}
\mathfrak{Q}_T ( x_{T-1}^k, \xi_{T j}  ) = 
\left\{ 
\begin{array}{l}
\displaystyle \inf_{x_T \in \mathbb{R}^n} \langle  c_{T j} ,  x_T \rangle \\
A_{T j} x_{T} + B_{T j} x_{T-1}^k = b_{T j},
x_T \geq 0,
\end{array}
\right.
\mbox{ with dual }
\left\{ 
\begin{array}{l}
\sup_{\lambda} \langle \lambda ,   b_{T j} - B_{T j} x_{T-1}^k  \rangle \\
A_{T j}^\top \lambda \leq c_{T j}.
\end{array}
\right.
\end{equation}
Let $\lambda_{T j}^k$ be an optimal solution of the dual problem above. We get
$$
\mathfrak{Q}_T ( x_{T-1}, \xi_{T j}  ) \geq \langle \lambda_{T j}^{k},  b_{T j} - B_{T j} x_{T-1} \rangle
$$
and compute $\theta_{T j}^k=\langle  b_{T j} , \lambda_{T j}^{k} \rangle$ and  $\beta_{T j}^k = - B_{T j}^\top \lambda_{T j}^k$.
Then for $t=T-1$ down to $t=2$, knowing $\mathcal{Q}_{t+1}^k \leq \mathcal{Q}_{t+1}$,
we solve  the problem below for $j=1,\ldots,M_t$,
\begin{equation}\label{backwardt}
{\underline{\mathfrak{Q}}}_t^k ( x_{t-1}^k, \xi_{t j}  ) = 
\left\{ 
\begin{array}{l}
\displaystyle \inf_{x_t} \langle  c_{t j} ,  x_t \rangle + \mathcal{Q}_{t+1}^k ( x_t ) \\
x_t \in X_t( x_{t-1}^k , \xi_{t j} )
\end{array}
\right.
=
\left\{ 
\begin{array}{l}
\displaystyle \inf_{x_t, f} \langle c_{t j} ,  x_t \rangle + \sum_{\ell=1}^{M_{t+1}} p_{t+1 \ell}  f_{\ell} \\
A_{t j} x_{t} + B_{t j} x_{t-1}^k = b_{t j}, x_t \geq 0,\\
f_{\ell} \geq \theta_{t+1 \ell}^i + \langle \beta_{t+1 \ell}^i , x_t  \rangle, i=1,\ldots,k, \ell=1,\ldots,M_{t+1}.
\end{array}
\right.
\end{equation}
Observe that due to (H2) the above problem is feasible and has a finite optimal value. Therefore ${\underline{\mathfrak{Q}}}_t^k ( x_{t-1}^k, \xi_{t j}  )$
can be expressed as the optimal value of the corresponding dual problem:
\begin{equation}\label{dualpbtback}
{\underline{\mathfrak{Q}}}_t^k ( x_{t-1}^k, \xi_{t j}  ) = 
\left\{
\begin{array}{l}
\displaystyle \sup_{\lambda, \mu} \; \langle \lambda ,  b_{t j} - B_{t j} x_{t-1}^k   \rangle + \sum_{i=1}^k \sum_{\ell=1}^{M_{t+1}} \mu_{i \ell} \theta_{t+1 \ell}^i  \\
A_{t j}^\top \lambda + \sum_{i=1}^k \sum_{\ell=1}^{M_{t+1}} \mu_{i \ell} \beta_{t+1 \ell}^i \leq c_{t j},\\
p_{t+1 \ell} = \sum_{i=1}^k \mu_{i \ell},\,\ell=1,\ldots,M_{t+1},\\
\mu_{i \ell} \geq 0,\,i=1,\ldots,k,\ell=1,\ldots,M_{t+1}.
\end{array}
\right.
\end{equation}
Let $(\lambda_{t j}^k, \mu_{t j}^k )$ be an optimal solution of dual problem \eqref{dualpbtback}.
Using the fact that $\mathcal{Q}_{t+1}^k \leq \mathcal{Q}_{t+1}$, we get
$$
\mathfrak{Q}_t ( x_{t-1} , \xi_{t j}  ) \geq  {\underline{\mathfrak{Q}}}_t^k ( x_{t-1} , \xi_{t j}  ) \geq 
\langle \lambda_{t j}^k ,  b_{t j} - B_{t j} x_{t-1} \rangle + \langle \mu_{t j}^k , \theta_{t+1}^k \rangle
$$
and we compute 
$$
\theta_{t j}^k =\langle  \lambda_{t j}^k ,  b_{t j} \rangle +  \langle \mu_{t j}^k , \theta_{t+1}^k \rangle \mbox{ and }
\beta_{t j}^k =- B_{t j}^\top \lambda_{t j}^k.
$$
In these expressions, vector $\theta_{t+1}^k$ has components $\theta_{t+1 \ell}^i, \ell=1,\ldots,M_{t+1}, i=1,\ldots,k$,
arranged in the same order as components $\mu_{t j}^k(\ell, i)$ of $\mu_{t j}^k$.\\

\par {\textbf{Step 4:} Do $k \leftarrow k+1$ and go to Step 2.

\subsection{Multicut stochastic decomposition with cut selection}\label{sec:cutselectionalgo}

We now describe a variant of MuDA that stores all cut coefficients $\theta_{t j}^i, \beta_{t j}^i$, 
and trial points $x_{t-1}^i$,
but that uses a reduced set of cuts $\mathcal{C}_{t j}^i$ to approximate functions $\mathfrak{Q}_{t}(\cdot,\xi_{t j})$
when solving problem  
\eqref{pbforwardpass} in the forward pass and
\eqref{backwardt} in the backward pass.
Let $S_{t j}^k$ be the set of indices of the cuts selected at the end of iteration $k$ to approximate
$\mathfrak{Q}_{t}(\cdot,\xi_{t j})$.  At the end of the backward pass of iteration $k$, the variant of MuDA with cut selection 
computes  
approximations $\mathcal{Q}_t^k$ of $\mathcal{Q}_t$ given by 
\eqref{approxQt} now with $\mathfrak{Q}_t^k( \cdot, \xi_{t j})$ given by 
\begin{equation} \label{qtk}
\displaystyle \mathfrak{Q}_{t}^k( x_{t-1}, \xi_{t j} ) =  \max_{\ell \in S_{t j}^k} \, \mathcal{C}_{t j}^\ell( x_{t-1}),
\end{equation}
where the set $S_{t j}^k$ is a subset of the set of indices of the cuts that
have the largest value for at least one of the trial points computed so far. 
More precisely, sets $S_{t j}^k$ are initialized taking $S_{t j}^0=\{0\}$.  
For $t \in \{2,\ldots,T\}$ and $k \geq 1$, sets $S_{t j}^k$ are computed as follows.
For $i=1,\ldots,k$, $t=2,\ldots,T$, $j=1,\ldots,M_t$, let
$I_{t j}^{i k}$ be the set of cuts for $\mathfrak{Q}_{t}(\cdot,\xi_{t j})$ computed at iteration $k$ or before, that have the largest value
at $x_{t-1}^i$:
\begin{equation} \label{indexlevel1}
I_{t j}^{i k} = \operatorname*{arg\,max}_{\ell=1,\ldots,k} \mathcal{C}_{t j}^{\ell}( x_{t-1}^i ),
\end{equation}
where the cut indices in $I_{t j}^{i k}$ are sorted in ascending order.
With a slight abuse of notation, we will denote the $\ell$-th smallest element in $I_{t j}^{i k}$ by $I_{t j}^{i k}(\ell)$.
For instance, if $I_{t j}^{i k}=\{2,30,50\}$ then $I_{t j}^{i k}(1)=2, I_{t j}^{i k}(2)=30$, and $I_{t j}^{i k}(3)=50$.
A cut selection strategy is given by a set of selectors $\mathcal{S}_{t j}(m)$, $t=2,\ldots,T$,$j=1,\ldots,M_t$, $m=1,2,\ldots,$ where 
$\mathcal{S}_{t j}(m)$ is a subset of the set 
of $m$ integers 
$\{1,2,\ldots,m\}$, giving the indices of the cuts to select in
$I_{t j}^{i k}$ through the relation
$$
S_{t j}^k = \bigcup_{i=1}^k \left\{I_{t j}^{i k}(\ell) : \ell \in \mathcal{S}_{t j}(| I_{t j}^{i k} |) \right\},
$$
where $| I_{t j}^{i k} |$ is the cardinality of set $I_{t j}^{i k}$.
We require the selectors to satisfy the following assumption:
\begin{itemize}
\item[(H3)] for $t=2,\ldots,T$, $j=1,\ldots,M_t$, for every $m \geq 1$, $\mathcal{S}_{t j}(m) \subseteq \mathcal{S}_{t j}(m+1)$.
\end{itemize}
Level 1 and Limited Memory Level 1 cut selection strategies described in Examples \ref{excs1} and \ref{excs2}
respectively correspond to the least and most economic selectors satisfying (H3):
\begin{example}[Level 1 and Territory Algorithm] \label{excs1} The strategy $\mathcal{S}_{t j}(m)=\{1,2,\ldots,m\}$ selects all cuts that have the highest value for at least
one trial point. In the context of SDDP, this strategy was called Level 1 in \cite{dpcuts0}  and
Territory Algorithm in \cite{pfeifferetalcuts}. For this strategy, we have $S_{T j}^k=\{1,\ldots,k\}$ for all $j$ and $k \geq 1$,
meaning that no cut selection is needed for the last stage $T$. This comes from the fact that for all  $k \geq 2$ and $1 \leq k_1 \leq k$,
cut $\mathcal{C}_{T j}^{k_1}$ is selected because it is one of the cuts with the highest value at $x_{T-1}^{k_1}$. Indeed,
for any $1 \leq k_2 \leq k$ with $k_2 \neq k_1$, 
since $\lambda_{T j}^{k_2}$ is feasible for problem \eqref{backwardT} with $x_{T-1}^{k}$ replaced by $x_{T-1}^{k_1}$, we
get
$$
\mathcal{C}_{T j}^{k_1}( x_{T-1}^{k_1} ) = \mathfrak{Q}_T( x_{T-1}^{k_1} , \xi_{T j} ) 
\geq \langle \lambda_{T j}^{k_2} , b_{T j} - B_{T j} x_{T-1}^{k_1} \rangle = \mathcal{C}_{T j}^{k_2}( x_{T-1}^{k_1} ).
$$
\end{example}
\begin{example}[Limited Memory Level 1]\label{excs2} The strategy that eliminates the largest amount of cuts, 
called Limited Memory Level 1 (LML 1 for short), consists in taking
a singleton for every set $\mathcal{S}_{t j}(m)$. For (H3) to be satisfied, this implies
$\mathcal{S}_{t j}(m)=\{1\}$. This choice corresponds to the Limited Memory Level 1 cut selection introduced in \cite{guiguesejor2017}
in the context of DDP. For that particular choice, at a given point, among the cuts that have the highest value
only the oldest (i.e., the cut that was first computed among the cuts that have the highest value at that point) is selected.
\end{example}

\begin{remark}
Observe that our selectors select cuts for functions $\mathfrak{Q}_t(\cdot,\xi_{t j})$
whereas Level 1 (resp. LML 1) was introduced to select cuts for
functions $\mathcal{Q}_t$ for SDDP (resp. DDP).
Therefore we could have used the terminologies Multicut Level 1 and Multicut Limited Memory Level 1
instead of Level 1 and Limited Memory Level 1. We did not do so to simplify and therefore
to know to which functions cut selection strategies apply, it suffices to add the name 
of the decomposition method; for instance MuDA with Level 1 cut selection (in which case cut selection applies
to functions  $\mathfrak{Q}_t(\cdot,\xi_{t j})$, as explained above) or SDDP with Limited Memory
Level 1 cut selection (in which case cut selection applies to functions $\mathcal{Q}_t$).
\end{remark}

The computation of $S_{t j}^k$, i.e., of the cut indices to select at iteration $k$, is performed in the backward pass
(immediately after computing cut $\mathcal{C}_{t j}^k$) using the pseudo-code given in the left and right  panels of Figure \ref{figurecut1} for
 the Level 1 and LML 1 cut selection strategies respectively.

In this pseudo-code, we use 
the notation $I_{t j}^i$ in place of $I_{t j}^{i k}$. We also store in variable
$m_{t j}^i$ the current value of the highest cut for $\mathfrak{Q}_{t}(\cdot, \xi_{t j})$
at $x_{t-1}^{i}$. At the end of the first iteration, we initialize
$m_{t j}^1 = \mathcal{C}_{t j}^1( x_{t-1}^{1})$. After cut $\mathcal{C}_{t j}^k$ is computed at 
iteration $k \geq 2$, these variables are updated using the relations
$$   
\left\{
\begin{array}{lll}
m_{t j}^i &  \leftarrow & \max (m_{t j}^i , \mathcal{C}_{t j}^k ( x_{t-1}^{i}) ),\;i=1,\ldots,k-1,\\
m_{t j}^k &  \leftarrow & \max (\mathcal{C}_{t j}^{\ell }( x_{t-1}^{k}), \ell=1,\ldots,k ).
\end{array}
\right. 
$$
We also use an array  of Boolean called {\tt{Selected}} using the information given by variables $I_{t j}^i$
whose $\ell$-th entry is {\tt{True}} if cut $\ell$ is selected for $\mathfrak{Q}_{t}(\cdot, \xi_{t j})$ and {\tt{False}} otherwise. 
This allows us  to avoid copies of cut indices that may appear in $I_{t {j}}^{i_1 k}$ and $I_{t {j}}^{i_2 k}$ with $i_1 \neq i_2$.
\begin{figure}
\begin{tabular}{|c|c|}
 \hline 
Level 1 & Limited Memory Level 1 \\
\hline
\begin{tabular}{l}
$I_{t j}^{k}=\{k\}$, $m_{t  j}^k =\mathcal{C}_{t j}^k ( x_{t-1}^k )$.\\
{\textbf{For}} $\ell=1,\ldots,k-1$,\\
\hspace*{0.3cm}{\textbf{If }}$\mathcal{C}_{t j}^k( x_{t-1}^{\ell} ) > m_{t j}^{\ell}$ \\
\hspace*{0.6cm}$I_{t j}^{\ell}=\{k\},\; m_{t j}^{\ell}=\mathcal{C}_{t j}^k( x_{t-1}^{\ell} )$\\
\hspace*{0.3cm}{\textbf{Else if}} $\mathcal{C}_{t j}^k( x_{t-1}^{\ell} ) = m_{t j}^{\ell}$ \\
\hspace*{0.6cm}$I_{t j}^{\ell}=I_{t j}^{\ell} \cup \{k\}$\\
\hspace*{0.3cm}{\textbf{End If}}\\
\hspace*{0.3cm}{\textbf{If }}$\mathcal{C}_{t j}^{\ell}( x_{t-1}^{k} ) > m_{t j}^k$ \\
\hspace*{0.6cm}$I_{t j}^k =\{\ell\},\; m_{t j}^k = \mathcal{C}_{t j}^{\ell}( x_{t-1}^{k} )$\\
\hspace*{0.3cm}{\textbf{Else if}} $\mathcal{C}_{t j}^{\ell}( x_{t-1}^{k} ) = m_{t j}^{k}$ \\
\hspace*{0.6cm}$I_{t j}^{k}=I_{t j}^{k} \cup \{\ell\}$\\
\hspace*{0.3cm}{\textbf{End If}}\\
{\textbf{End For}}\\
{\textbf{For}} $\ell=1,\ldots,k$,\\
\hspace*{0.3cm}{\tt{Selected}}[$\ell$]={\tt{False}}\\
{\textbf{End For}}\\
{\textbf{For}} $\ell=1,\ldots,k$\\
\hspace*{0.3cm}{\textbf{For}} $m=1,\ldots,|I_{t j}^{\ell}|$\\
\hspace*{0.6cm}{\tt{Selected}}[$I_{t j}^{\ell}[m]$]={\tt{True}}\\
\hspace*{0.3cm}{\textbf{End For}}\\
{\textbf{End For}}\\
$S_{t j}^k=\emptyset$\\
{\textbf{For}} $\ell=1,\ldots,k$\\
\hspace*{0.3cm}{\textbf{If}} {\tt{Selected}}[$\ell$]={\tt{True}} \\
\hspace*{0.6cm}$S_{t j}^k=S_{t j}^k \cup \{\ell\}$\\
\hspace*{0.3cm}{\textbf{End If}}\\
{\textbf{End For}}\\
\end{tabular}
&
\begin{tabular}{l}
\vspace*{-0.45cm}\\
$I_{t j}^{k}=\{1\}$, $m_{t  j}^k =\mathcal{C}_{t j}^1 ( x_{t-1}^k )$.\\
{\textbf{For}} $\ell=1,\ldots,k-1$,\\
\hspace*{0.3cm}{\textbf{If }}$\mathcal{C}_{t j}^k( x_{t-1}^{\ell} ) > m_{t j}^{\ell}$ \\
\hspace*{0.6cm}$I_{t j}^{\ell}=\{k\},\; m_{t j}^{\ell}=\mathcal{C}_{t j}^k( x_{t-1}^{\ell} )$\\
\hspace*{0.3cm}{\textbf{End If}}\\
\vspace*{0.4cm}\\
\hspace*{0.3cm}{\textbf{If }}$\mathcal{C}_{t j}^{\ell + 1}( x_{t-1}^{k} ) > m_{t j}^k$ \\
\hspace*{0.6cm}$I_{t j}^k =\{\ell+1\},\; m_{t j}^k = \mathcal{C}_{t j}^{\ell + 1}( x_{t-1}^{k} )$\\
\hspace*{0.3cm}{\textbf{End If}}\\
\vspace*{0.4cm}\\
{\textbf{End For}}\\
{\textbf{For}} $\ell=1,\ldots,k$,\\
\hspace*{0.3cm}{\tt{Selected}}[$\ell$]={\tt{False}}\\
{\textbf{End For}}\\
{\textbf{For}} $\ell=1,\ldots,k$\\
\hspace*{0.3cm}{\textbf{For}} $m=1,\ldots,|I_{t j}^{\ell}|$\\
\hspace*{0.6cm}{\tt{Selected}}[$I_{t j}^{\ell}[m]$]={\tt{True}}\\
\hspace*{0.3cm}{\textbf{End For}}\\
{\textbf{End For}}\\
$S_{t j}^k=\emptyset$\\
{\textbf{For}} $\ell=1,\ldots,k$\\
\hspace*{0.3cm}{\textbf{If}} {\tt{Selected}}[$\ell$]={\tt{True}} \\
\hspace*{0.6cm}$S_{t j}^k=S_{t j}^k \cup \{\ell\}$\\
\hspace*{0.3cm}{\textbf{End If}}\\
{\textbf{End For}}\\
\end{tabular}
\\
\hline
\end{tabular}

\caption{Pseudo-codes for the computation of set $S_{t j}^k$ for fixed $t\in \{2,\ldots,T\}, k \geq 2, j=1,\ldots,M_t$, and two cut selection strategies.}
\label{figurecut1}
\end{figure}

\section{Convergence analysis}\label{sec:convanalysis}

In this section, we prove that CuSMuDA converges in a finite number of iterations.
We will make the following assumption:
\begin{itemize}
\item[(H4)] The samples in the forward passes are independent: $(\tilde \xi_2^k, \ldots, \tilde \xi_T^k)$ is a realization of
$\xi^k=(\xi_2^k, \ldots, \xi_T^k) \sim (\xi_2, \ldots, \xi_T)$ 
and $\xi^1, \xi^2,\ldots,$ are independent.
\end{itemize}
The convergence proof is based on the following lemma:
\begin{lemma} \label{lemmafinitecuts}
Assume that all subproblems in the forward and backward passes
of CuSMuDA
are solved using an algorithm
that necessarily outputs an extreme point of the feasible set (for instance the simplex algorithm).
Let assumptions (H1), (H2), (H3), and (H4) hold.
Then almost surely, there exists $k_0 \geq 1$ such that 
for every $k \geq k_0$, $t=2,\ldots,T$, $j=1,\ldots,M_t$, we have
\begin{equation} \label{ineqk0}
\mathfrak{Q}_{t}^k (\cdot, \xi_{t j})=\mathfrak{Q}_{t}^{k_0}(\cdot, \xi_{t j})
\mbox{ and }\mathcal{Q}_{t}^k =\mathcal{Q}_{t}^{k_0}.
\end{equation}
\end{lemma}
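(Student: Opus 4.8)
The key observation is that each cut $\mathcal{C}_{t j}^k$ is determined by a dual solution ($\lambda_{Tj}^k$ at the last stage, or $(\lambda_{tj}^k,\mu_{tj}^k)$ at earlier stages) which the solver returns as an extreme point of the corresponding dual feasible set, together with the current trial point $x_{t-1}^k$. Under (H1) the data $\xi_{tj}$ ranges over a finite set; under (H2) the feasible sets $X_t(\cdot,\cdot)$ are uniformly bounded on the (bounded) reachable set of trial points, so all primal iterates $x_t^k$ live in a fixed compact set $\mathcal{X}$, and the dual feasible sets have only finitely many extreme points for each fixed data realization. The first step is therefore to argue, by a backward induction on $t$ from $T$ down to $2$, that there are only finitely many possible cuts: at stage $T$ the dual feasible set $\{A_{Tj}^\top\lambda\le c_{Tj}\}$ is fixed (it does not depend on the iteration), so $\lambda_{Tj}^k$ ranges over a finite set, hence so do $(\theta_{Tj}^k,\beta_{Tj}^k)$; assuming inductively that stages $t+1,\ldots,T$ generate finitely many distinct cut coefficients, the dual problem \eqref{dualpbtback} at stage $t$ has constraint data drawn from a finite pool, so its extreme points — and therefore the cuts $\mathcal{C}_{tj}^k$ — also range over a finite set. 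Call $\mathcal{C}_t$ this finite collection of possible cut affine functions for each $(t,j)$.

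The second step upgrades "finitely many possible cuts" to "eventually no new cut appears." This is where the cut selection subtlety (item (ii) in contribution (C), the reason Lemma \ref{lemmafinitecuts} is needed) enters. In ordinary SDDP one notes that the family of cuts kept is nondecreasing, so since the pool is finite it stabilizes. With cut selection, cuts are \emph{dropped} from $S_{tj}^k$, so monotonicity of the kept set fails. Instead one uses the variables $m_{tj}^i$, the running maximum over all computed cuts at trial point $x_{t-1}^i$: for fixed $i$, $m_{tj}^i$ is nondecreasing in $k$ and bounded above (by $\mathfrak{Q}_t(x_{t-1}^i,\xi_{tj})$, using the uniform bounds from (H2)), and it takes values in the finite set $\{\mathcal{C}(x_{t-1}^i):\mathcal{C}\in\mathcal{C}_t\}$ — but $i$ itself grows with $k$. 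The clean way around this is: consider the (finite) pool $\mathcal{C}_t$ of possible cuts; for a new cut $\mathcal{C}_{tj}^k$ to change the function $\mathfrak{Q}_t^k(\cdot,\xi_{tj})$, it must either be an affine function not previously generated, or one previously generated but previously dropped and now reinstated. The first alternative can happen only finitely often (finite pool). For the second, I would invoke (H3): once a cut $\mathcal{C}_{tj}^\ell$ has been selected at trial point $x_{t-1}^i$ because $\ell\in\mathcal{S}_{tj}(|I_{tj}^{ik}|)$, as later cuts are added either $\ell$ stays in the argmax set $I_{tj}^{ik}$ — and since $|I_{tj}^{ik}|$ can only increase when $\ell$ stays in it, (H3) guarantees $\ell$ keeps being selected at $x_{t-1}^i$ — or some strictly higher cut appears at $x_{t-1}^i$, which strictly increases $m_{tj}^i$; the latter can happen only finitely many times \emph{per trial point}, and since a dropped-then-reinstated cut requires its value at some trial point to return to the running max (impossible once a strict increase has occurred there), one rules out infinitely many reinstatements across the finitely many distinct cut-values. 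Combining, only finitely many iterations change any $\mathfrak{Q}_t^k(\cdot,\xi_{tj})$; past the last such iteration, set $k_0$, and \eqref{ineqk0} holds for the $\mathfrak{Q}$'s, hence for $\mathcal{Q}_t^k$ via \eqref{approxQt}.

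The third, minor, step handles the "almost surely" qualifier. The only randomness is in the forward sampling (H4), which affects only \emph{which} trial points $x_{t-1}^k$ are generated (within the compact set $\mathcal{X}$) and hence which cuts from the finite pool $\mathcal{C}_t$ get computed. Since regardless of the sample path the set of distinct cuts ever computed is a subset of the finite pool, the stabilization argument of step two goes through pathwise, for \emph{every} realization of $(\xi^k)_{k\ge1}$; so the conclusion in fact holds surely, a fortiori almost surely. I expect the main obstacle to be step two — precisely formalizing, using only (H3), why a cut that is dropped cannot be reinstated infinitely often, i.e., controlling the interaction between the growing index set of trial points and the monotone-but-only-per-point quantities $m_{tj}^i$. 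A convenient bookkeeping device is to track, for each element of the finite value-pool at each trial point, the first iteration at which $m_{tj}^i$ attains a value $\ge$ that element; monotonicity then pins down the selection status of every cut from some iteration on.
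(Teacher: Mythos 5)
Your overall architecture is the same as the paper's: a backward induction on $t$ showing that the cut coefficients live in a finite pool (extreme points of polyhedra whose data come from finite sets), followed by a stabilization argument for the selected sets that combines (H3) with the monotonicity of the running maxima $m_{tj}^i$. However, the step you yourself flag as the main obstacle is a genuine gap, and the missing ingredient is concrete: you need that the trial points $x_{t-1}^k$ take only \emph{finitely many distinct values}, not merely that they lie in a compact set $\mathcal{X}$. Your counting in step two is ``finitely many strict increases of $m_{tj}^i$ per trial point'' and ``no reinstatement at a trial point once $m_{tj}^i$ has strictly increased past a cut's value there,'' but a cut dropped at one trial point can reappear in $S_{tj}^k$ via a \emph{new} trial point, and with infinitely many distinct trial points in a compact set this per-point bookkeeping does not bound the total number of changes of $\bigcup_i \mathcal{I}_{tj}^{ik}$. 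The paper closes exactly this hole by applying the extreme-point hypothesis to the \emph{forward-pass} LPs as well: once the pool of cuts at stages $t+1,\dots,T$ is finite, the forward subproblem at stage $t$ (rewritten with epigraph variables $f_\ell$) has only finitely many possible feasible polyhedra, hence finitely many possible extreme points, so every $x_{t-1}^k$ is eventually a copy of one of finitely many earlier trial points and the per-point argument can be summed. You only invoke the extreme-point assumption for the dual solutions, and compactness is not a substitute here.

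Two further points. First, your plan decouples ``finite pool at every stage'' (step one) from ``stabilization'' (step two), but the induction step for the pool at stage $t<T$ is not immediate as you state it: the dual polyhedron \eqref{dualpbtback} lives in a space whose dimension grows with $k$, so ``constraint data from a finite pool $\Rightarrow$ finitely many extreme points'' needs an argument (e.g., that at an extreme point at most one of any group of duplicated columns $\beta_{t+1\,\ell}^i$ can carry positive mass, reducing to finitely many fixed-dimensional polyhedra). The paper instead bundles the stabilization of the selected sets into the induction hypothesis $\mathcal{H}(t+1)$ and maps extreme points of $\mathcal{P}_{tjk}$ to extreme points of the fixed polyhedron $\mathcal{P}_{tjk_{t+1}}$; your decoupled route can be repaired but is not complete as written. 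Second, your observation in step three that the lemma holds pathwise (the ``infinitely many visits'' event $\Omega_1$ is only needed for Theorem~\ref{convproof}, not for the stabilization itself) is correct and consistent with the paper, which introduces $\Omega_1$ in the lemma's proof but does not actually use it there.
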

\begin{proof} 
Let $\Omega_1$ be the event on the sample space $\Omega$ of sequences of forward scenarios such that 
every scenario is sampled an infinite number of times.
By Assumption (H4), this event $\Omega_1$ has probability one.

Consider a realization $\omega \in \Omega$ of CuSMuDA in $\Omega_1$ corresponding to realizations $(\tilde \xi^k_{1:T})_k$ of  $(\xi^k_{1:T})_k$ in the forward pass. To simplify, we will drop $\omega$ in the notation. For instance,
we will simply write $x_t^k, \mathcal{Q}_t^k$  for realizations  $x_t^k( \omega)$ and $\mathcal{Q}_t^k(\cdot)(\omega)$
of $x_t^k, \mathcal{Q}_t^k$ given realization $\omega \in \Omega$ of CuSMuDA.

\if{
\paragraph{{\textbf{1$^0$}}} We show by induction that the set of possible trial points $\{x_{t}^k : k \geq 1 \}$ is finite for every $t=1,\ldots,T-1$.
\paragraph{{\textbf{1$^0.$a.}}} $x_1^k$ is an extreme point of the polyhedron $X_1(x_0, \xi_1)$ which
has a finite number of extreme points since $A_1$ has a finite number of rows and columns.
Therefore $x_1^k$ can only take a finite number of values.
\paragraph{{\textbf{1$^0.$b.}}} Assume now that $x_{t-1}^k$ can only take a finite number of values for some $t\in \{2,\ldots,T\}$.
Recall that $x_t^k$ is an extreme point of one of the sets $X_t(x_{t-1}^k, \xi_{t j})$ where $j \in \{1,\ldots,M_t\}$.
Since each matrix $A_{t j}$ has a finite number of
rows and columns, each set  $X_t(x_{t-1}^k, \xi_{t j})$ has a finite number of extreme points.
Using the induction hypothesis and $M_t<+\infty$, there is a finite number of these sets and therefore
$x_t^k$ can only take a finite number of  values. This shows {\textbf{1$^0$.}}
}\fi
\par  We show by induction on $t$ that the number of different cuts computed by the algorithm is finite and that
after some iteration $k_t$ the same cuts are selected for functions $\mathfrak{Q}_{t}(\cdot, \xi_{t j})$.
Our induction hypothesis $\mathcal{H}(t)$ for $t\in \{2,\ldots,T\}$ is
that the sets $\{(\theta_{t j}^k , \beta_{t j}^k) : k \in \mathbb{N}\}, j=1,\ldots,M_t$, are finite and there exists some
finite $k_t$ such that for every $k>k_t$ we have
\begin{equation}\label{inductionT}
\{(\theta_{t j}^{\ell} , \beta_{t j}^{\ell}) : \ell \in S_{t j}^k \}=
\{(\theta_{t j}^{\ell} , \beta_{t j}^{\ell}) : \ell \in S_{t j}^{k_t} \}
\mbox{ and }x_{t-1}^k = x_{t-1}^{k_t},
\end{equation}
for every $j=1,\ldots,M_t$.
We will denote by $\mathcal{I}_{t j}^{i k}$ the set $\left\{I_{t j}^{i k}(\ell) : \ell \in \mathcal{S}_{t j}(| I_{t j}^{i k} |) \right\}$.
We first show, in items {{\textbf{a}}} and {{\textbf{b}}}  below that $\mathcal{H}(T)$ holds.
\par {{\textbf{a.}}} Observe that $\lambda_{T j}^k$ defined in the backward pass of CuSMuDA is an extreme point of the polyhedron
$\{\lambda : A_{T j}^\top \lambda \leq c_{T j}\}$. 
This polyhedron is a finite intersection of closed half spaces in finite dimension (since
$A_{T j}$ has a finite number of rows and columns) and therefore has a finite number of extreme points. It follows that $\lambda_{T j}^k$ can only take a finite
number of values, same as $(\theta_{T j}^k, \beta_{T j}^k)=(\langle \lambda_{T j}^k, b_{T j} \rangle , -B_{T j}^\top \lambda_{T j}^k)$, and
there exists ${\bar k}_{T}$ such that for every $k>{\bar k}_T$ and every $j$, each cut $\mathcal{C}_{T j}^k$ is a copy of a cut $\mathcal{C}_{T j}^{k'}$ with $1 \leq k' \leq {\bar k}_T$ (no new
cut is computed for functions $\mathfrak{Q}_{T}(\cdot, \xi_{T j})$ for $k>{\bar k}_T$).

Now recall that $x_{T-1}^k$ computed in the forward pass is a solution of
\eqref{pbforwardpass} with $t=T-1$ and this optimization problem
can be written as a linear problem 
adding variables $f_1,f_2,\ldots,f_{M_T}$
replacing in the objective $\mathcal{Q}_{T}^{k-1}(x_{T-1})$
by $\sum_{\ell=1}^{M_T} p_{T \ell} f_{\ell}$ and adding the linear
constraints 
$f_{\ell} \geq \theta_{T \ell}^i + \langle \beta_{T \ell}^i , x_{T-1} \rangle$,
$i=1,\ldots,k-1,$ $\ell=1,\ldots,M_T$.
On top of that, for iterations $k>{\bar k}_T$, since functions 
$\mathfrak{Q}_T^k(\cdot,\xi_{T j})$ are made of a collection 
of cuts taken from
the finite and fixed
set of cuts $\mathcal{C}_{T j}^{\ell}, \ell \leq \bar k_T$, the set of 
possible functions 
$(\mathfrak{Q}_T^k(\cdot,\xi_{T j}))_{k \geq 1}$ and therefore of possible functions 
$(\mathcal{Q}_T^k)_{k \geq 1}$ is finite.
It follows that there is a finite set of possible polyhedrons for the feasible
set of \eqref{pbforwardpass} (with $t=T-1$) rewritten as a linear program as we have just
explained, adding variables $f_1,f_2,\ldots,f_{M_T}$.
Since these polyhedrons have a finite number of extreme points (recall that
there is a finite number of different linear constraints), there is only
a finite number of possible trial points $(x_{T-1}^k)_{k \geq 1}$.
Therefore we can assume without loss of generality that $\bar k_T$ is such that 
for iterations $k>\bar k_T$, all trial point $x_{T-1}^k$ is also a copy of a trial point
$x_{T-1}^{k'}$ with $k' \leq \bar k_T$.

\par {\textbf{b}}. We show that for every $i \geq 1$ and $j=1,\ldots,M_T$,
there exists $1 \leq i' \leq {\bar k}_T$ 
and  $k_{T j}(i') \geq  {\bar k}_T$
such that for every $k \geq  \max(i, k_{T j}(i'))$ we have
\begin{equation}\label{inductioncruc}
\left\{ (\theta_{T j}^{\ell}, \beta_{T j}^{\ell}) :  \ell \in \mathcal{I}_{T j}^{i k} \right\}=
\left\{ (\theta_{T j}^{\ell}, \beta_{T j}^{\ell}) :  \ell \in \mathcal{I}_{T j}^{i' k_{T j}(i')} \right\},
\end{equation}
which will show $\mathcal{H}(T)$ with  $k_T = \max_{1 \leq  j \leq M_T, 1 \leq i' \leq {\bar k}_T} k_{T j}( i')$.
Let us show that \eqref{inductioncruc} indeed holds.
Let us take $i \geq 1$ and $j \in \{1,\ldots,M_T \}$.
If $1 \leq i \leq {\bar k}_{T}$ define $i'=i$. Otherwise
due to {\textbf{a.}} we can find $1 \leq i' \leq {\bar k}_T$ such that 
$x_{T-1}^i = x_{T-1}^{i'}$ which implies $I_{T j}^{i k}= I_{T j}^{i' k}$ for every $k \geq i$.
Now consider the sequence of sets $(I_{T j}^{i' k})_{k \geq  {\bar k}_T}$.
Due to the definition of $\bar k_T$, the sequence  $(|I_{T j}^{i' k}|)_{k > {\bar k}_T}$ is nondecreasing and therefore
two cases can happen:
\begin{itemize}
\item[(A)] there exists $k_{T j}(i')$ such that for $k \geq k_{T j}(i')$ we have $I_{T j}^{i' k}=I_{T j}^{i' k_{T j}(i')}$ (the cuts
computed after iteration $k_{T j}(i')$ are not active at $x_{T-1}^{i'}$). In this case 
$\mathcal{I}_{T j}^{i' k} = \mathcal{I}_{T j}^{i' k_{T j}(i')}$
for $k \geq k_{T j}(i')$, $\mathcal{I}_{T j}^{i k} = \mathcal{I}_{T j}^{i' k} = \mathcal{I}_{T j}^{i' k_{T j}(i')}$
for $k \geq \max( k_{T j}(i'), i)$ and 
\eqref{inductioncruc} holds.
\item[(B)] The sequence $(|I_{T j}^{i' k}|)_{k \geq  {\bar k}_T}$ is unbounded.
Due to Assumption (H3), the sequence $(|\mathcal{S}_{T j}(m)|)_m$ is nondecreasing.
If there exists $k_{T j}>\bar k_T$ such that  $\mathcal{S}_{T j}(k)=\mathcal{S}_{T j}(k_{T j})$ for $k \geq k_{T j}$
then if $k_{T j}( i' )$ is the smallest $k$ such that 
$|I_{T j}^{i' k}| \geq k_{T j}$ then for every $k \geq k_{T j}( i' )$ we have 
$\mathcal{I}_{T j}^{i' k} = \mathcal{I}_{T j}^{i' k_{T j}(i')}$ and
for $k \geq \max( k_{T j}(i'), i)$ we deduce 
$\mathcal{I}_{T j}^{i k} = \mathcal{I}_{T j}^{i' k_{T j}(i')}$
and \eqref{inductioncruc} holds.
Otherwise the sequence $(|\mathcal{S}_{T j}(m)|)_m$ is unbounded and an infinite number of cut indices
are selected from the sets $(I_{T j}^{i' k})_{k \geq i'}$ to make up sets $(\mathcal{I}_{T j}^{i' k})_{k \geq i'}$. 
However, since there is a finite number of different cuts, there is only a finite number of iterations where a new
cut can be selected from $I_{T j}^{i' k}$ and therefore there exists $k_{T j}(i')$ such that \eqref{inductioncruc} holds
for $k \geq \max( k_{T j}(i'), i)$.
\end{itemize}
\paragraph{{\textbf{c.}}}
Now assume that $\mathcal{H}(t+1)$ holds for some $t \in \{2,\ldots,T-1\}$. We want to show $\mathcal{H}(t)$.
Consider the set $\mathcal{D}_{t j k}$ of points of form
\begin{equation}\label{relationthetabeta}
(\langle  \lambda ,  b_{t j} \rangle +  \sum_{\ell=1}^{M_{t+1}} \sum_{i \in S_{t+1 \ell}^k} \mu_{i \ell} \theta_{t+1 \ell}^i , - B_{t j}^\top \lambda )
\end{equation}
 where $(\lambda, \mu )$ is an extreme point of 
the set $\mathcal{P}_{t j k}$ of points $(\lambda, \mu)$
satisfying 
\begin{equation}\label{extpointpoly}
\begin{array}{l}
\mu \geq 0, p_{t+1 \ell}=\sum_{i \in S_{t+1 \ell}^k} \mu_{i \ell},\;\ell=1,\ldots,M_{t+1}, \;A_{t j}^\top \lambda + \sum_{\ell =1}^{M_{t+1}} \sum_{i \in S_{t+1 \ell}^k} \mu_{i \ell} \beta_{t+1 \ell}^i \leq c_{t j}.
\end{array}
\end{equation}
We claim that for every $k>k_{t+1}$, every point from $\mathcal{D}_{t j k}$
can be written as a point from $\mathcal{D}_{t j k_{t+1}}$, i.e., a point
of form \eqref{relationthetabeta}  with $k$ replaced by $k_{t+1}$ and $(\lambda, \mu )$ an extreme point of 
the set $\mathcal{P}_{t j k_{t+1}}$. Indeed, take a point from $\mathcal{D}_{t j k}$, i.e., a point
of form \eqref{relationthetabeta}
with $(\lambda, \mu)$ an extreme point of $\mathcal{P}_{t j k}$ and $k>k_{t+1}$. 
It can be written as a point from $\mathcal{D}_{t j k_{t+1}}$, i.e., a point of form 
\eqref{relationthetabeta} with $k$ replaced by $k_{t+1}$ and $(\lambda, \hat \mu)$ in the place
of $(\lambda, \mu)$
where $(\lambda, \hat \mu)$ is an extreme point of $\mathcal{P}_{t j k_{t+1}}$  obtained 
replacing the basic columns $\beta_{t+1 \ell}^i$ with $i \in S_{t+1 \ell}^k, i \notin S_{t+1 \ell}^{k_{t+1}}$
associated with $\mu$
by columns $\beta_{t+1 \ell}^{i'}$ with $i' \in S_{t+1 \ell}^{k_{t+1}}$
such that $(\theta_{t+1 \ell}^{i}, \beta_{t+1 \ell}^{i})=(\theta_{t+1 \ell}^{i'}, \beta_{t+1 \ell}^{i'})$ (this is possible due to $\mathcal{H}(t+1)$).

Since $\mathcal{P}_{t j k_{t+1}}$ has a finite number of extreme points,
the set $\mathcal{D}_{t j k}$ has a finite cardinality and recalling that
for CuSMuDA $(\theta_{t j}^k, \beta_{t j}^k) \in \mathcal{D}_{t j k}$, the cut coefficients $(\theta_{t j}^k, \beta_{t j}^k)$
can only take a finite number of values. This shows the first part of $\mathcal{H}(t)$. 
Therefore, there exists ${\bar k}_{t}$ such that for every $k>{\bar k}_t$ and every $j$, each cut $\mathcal{C}_{t j}^k$ is a copy of a cut $\mathcal{C}_{t j}^{k'}$ with $1 \leq k' \leq {\bar k}_t$ (no new
cut is computed for functions $\mathfrak{Q}_{t}(\cdot, \xi_{t j})$ for $k>{\bar k}_t$).
As for the induction step $t=T$, this clearly implies that
after some iteration, no new trial points are computed and therefore we can assume without loss
of generality that $\bar k_t$ is such that 
for $k>\bar k_T$ trial point $x_{t-1}^k$ is a copy of some $x_{t-1}^{\ell}$ with $1 \leq \ell \leq {\bar k}_t$.

Finally, we can show \eqref{inductionT}
proceeding as in {\textbf{b.}}, replacing $T$ by $t$. This achieves the proof of $\mathcal{H}(t)$.

Gathering our observations, we have shown that \eqref{ineqk0} holds with $k_0 = \max_{t=2,\ldots,T} k_t$.
\end{proof}
\begin{remark}
For Level 1 and LML 1 cut selection strategies corresponding to selectors
$\mathcal{S}_{t j}$  satisfying respectively $\mathcal{S}_{t j}(m)=\{1,\ldots,m\}$
and $\mathcal{S}_{t j}(m)=\{1\}$, integers $k_0, {\bar k}_t$ defined in
Lemma \ref{lemmafinitecuts} and its proof satisfy $k_0 \leq \max_{t=2,\ldots,T} {\bar k}_t$.
For other selectors $\mathcal{S}_{t j}$ this relation is not necessarily satisfied (see {\textbf{b.}}-(B) of the proof
of the lemma).
\end{remark}

\begin{theorem}\label{convproof} Let Assumptions (H1), (H2), (H3), and (H4) hold.
Assume that all subproblems in the forward and backward passes
of CuSMuDA
are solved using an algorithm
that necessarily outputs an extreme point of the feasible set (for instance the simplex algorithm).
Then Algorithm CuSMuDA converges with probability one in a finite number of iterations to a policy which is an optimal solution
of \eqref{firststodp}-\eqref{secondstodp}. 
\end{theorem}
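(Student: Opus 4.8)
The plan is to use Lemma~\ref{lemmafinitecuts} to freeze the value function approximations after some almost surely finite iteration, and then to show by backward induction on the stage that the frozen approximations are exact at every state visited by the associated greedy policy, which forces that policy to be optimal. Concretely, I would work on the probability-one event on which Lemma~\ref{lemmafinitecuts} holds and on which, by (H4), every scenario of $(\xi_2,\ldots,\xi_T)$ is sampled infinitely often, and fix such a realization; let $k_0$ be as in the lemma and recall from its proof that the set of trial points generated by the algorithm is finite. For $k>k_0$ the forward subproblems \eqref{pbforwardpass} use $\mathcal{Q}_{t+1}^{k-1}=\mathcal{Q}_{t+1}^{k_0}$ and the backward subproblems \eqref{backwardT} and \eqref{backwardt}--\eqref{dualpbtback} use $\mathcal{Q}_{t+1}^{k}=\mathcal{Q}_{t+1}^{k_0}$, so from iteration $k_0+1$ on the forward pass merely runs the stationary policy $\widehat\pi$ that is greedy for $(\mathcal{Q}_t^{k_0})_t$. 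Let $\mathcal{R}_0=\{x_0\}$ and, for $t=1,\ldots,T-1$, let $\mathcal{R}_t$ be the (finite) set of stage-$t$ states produced by the forward passes at iterations $k>k_0$. Using the independence assumption (H1), for every $x\in\mathcal{R}_{t-1}$ and every $j\in\{1,\ldots,M_t\}$ one can splice a scenario that drives $\widehat\pi$ to $x$ at stage $t-1$ and realizes $\xi_{t j}$ at stage $t$; this scenario is sampled infinitely often, whence (i) $x=x_{t-1}^k$ for infinitely many $k>k_0$, and (ii) some optimal solution of $\min\{\langle c_{t j},y\rangle+\mathcal{Q}_{t+1}^{k_0}(y):y\in X_t(x,\xi_{t j})\}$ belongs to $\mathcal{R}_t$.

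The core step is: for $t=T,T-1,\ldots,2$, for every $x\in\mathcal{R}_{t-1}$ and every $j$, $\mathfrak{Q}_t^{k_0}(x,\xi_{t j})=\mathfrak{Q}_t(x,\xi_{t j})$, hence $\mathcal{Q}_t^{k_0}(x)=\mathcal{Q}_t(x)$; I would prove it by backward induction on $t$. For $t=T$: since $\mathcal{Q}_{T+1}^{k_0}\equiv0$, at any iteration $k>k_0$ with $x=x_{T-1}^k$ problem \eqref{backwardT} is solved exactly and the dual optimum $\lambda_{T j}^k$ yields the cut $\mathcal{C}_{T j}^k$ with $\mathcal{C}_{T j}^k(x)=\mathfrak{Q}_T(x,\xi_{T j})$. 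As every cut is a lower bound of $\mathfrak{Q}_T(\cdot,\xi_{T j})$, cut $k$ attains $\max_{\ell\le k}\mathcal{C}_{T j}^\ell(x)$, so $k\in I_{T j}^{k k}$; since the selector $\mathcal{S}_{T j}(|I_{T j}^{k k}|)$ is nonempty, $S_{T j}^k$ contains an index of $I_{T j}^{k k}$, and the value at $x$ of the cut carrying that index equals $\mathfrak{Q}_T(x,\xi_{T j})$; hence $\mathfrak{Q}_T^k(x,\xi_{T j})=\mathfrak{Q}_T(x,\xi_{T j})$, which is $\mathfrak{Q}_T^{k_0}(x,\xi_{T j})$ by Lemma~\ref{lemmafinitecuts}, and summing against the weights $p_{T j}$ gives $\mathcal{Q}_T^{k_0}(x)=\mathcal{Q}_T(x)$. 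For $t<T$, assume the statement at $t+1$ and take $x\in\mathcal{R}_{t-1}$ and $j$; by (ii) there is $\hat x\in\mathcal{R}_t$ solving $\min\{\langle c_{t j},y\rangle+\mathcal{Q}_{t+1}^{k_0}(y):y\in X_t(x,\xi_{t j})\}$, and the induction hypothesis gives $\mathcal{Q}_{t+1}^{k_0}(\hat x)=\mathcal{Q}_{t+1}(\hat x)$, so $\underline{\mathfrak{Q}}_t^k(x,\xi_{t j})=\langle c_{t j},\hat x\rangle+\mathcal{Q}_{t+1}^{k_0}(\hat x)=\langle c_{t j},\hat x\rangle+\mathcal{Q}_{t+1}(\hat x)\ge\mathfrak{Q}_t(x,\xi_{t j})$, while $\underline{\mathfrak{Q}}_t^k\le\mathfrak{Q}_t$ because $\mathcal{Q}_{t+1}^{k_0}\le\mathcal{Q}_{t+1}$; therefore $\underline{\mathfrak{Q}}_t^k(x,\xi_{t j})=\mathfrak{Q}_t(x,\xi_{t j})$, and by (i) there is an iteration $k>k_0$ with $x_{t-1}^k=x$ at which the cut built from the dual \eqref{dualpbtback} satisfies $\mathcal{C}_{t j}^k(x)=\mathfrak{Q}_t(x,\xi_{t j})$, so one concludes exactly as for $t=T$.

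It remains to deduce that $\widehat\pi$ is optimal. Since $\widehat\pi$ keeps every stage-$t$ state in $\mathcal{R}_t$, a backward induction on $t$ using the core step shows that the cost-to-go functions $V_t^{\widehat\pi}$ of $\widehat\pi$ (with $V_{T+1}^{\widehat\pi}\equiv0$) satisfy $V_t^{\widehat\pi}(x)=\mathcal{Q}_t(x)$ for $x\in\mathcal{R}_{t-1}$: writing $\hat x_t\in\mathcal{R}_t$ for the action of $\widehat\pi$ at $(x,\xi_{t j})$, one has $V_t^{\widehat\pi}(x,\xi_{t j})=\langle c_{t j},\hat x_t\rangle+V_{t+1}^{\widehat\pi}(\hat x_t)=\langle c_{t j},\hat x_t\rangle+\mathcal{Q}_{t+1}(\hat x_t)=\langle c_{t j},\hat x_t\rangle+\mathcal{Q}_{t+1}^{k_0}(\hat x_t)$, which is the optimal value of the greedy subproblem at $(x,\xi_{t j})$ and hence $\le\mathfrak{Q}_t(x,\xi_{t j})$, whereas the reverse inequality holds because $\hat x_t$ is feasible for the true stage-$t$ problem and $V_{t+1}^{\widehat\pi}(\hat x_t)=\mathcal{Q}_{t+1}(\hat x_t)$; averaging over $j$ gives $V_t^{\widehat\pi}(x)=\mathcal{Q}_t(x)$. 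In particular $V_1^{\widehat\pi}(x_0)=\mathcal{Q}_1(x_0)$, so $\widehat\pi$ solves \eqref{firststodp}-\eqref{secondstodp}, and since $k_0$ is finite with probability one this occurs after finitely many iterations almost surely. I expect the main obstacle to be items (i)--(ii) of the first paragraph --- that the states reached by the frozen policy together with all realizations are ``closed'' under the one-step optimization, which is precisely where independence (H1) enters --- together with, in the core step, the verification that cut selection retains a cut that is tight at the trial point: by (H3) and the value-based definition of the selectors some cut maximal at $x$ is kept, and any cut maximal at $x$ has value $\mathfrak{Q}_t(x,\xi_{t j})$ there.
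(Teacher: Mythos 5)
Your proof is correct and follows essentially the same strategy as the paper's: work on the probability-one event where Lemma~\ref{lemmafinitecuts} applies and every scenario recurs, freeze the approximations at $k_0$, and show by backward induction on the stage that the frozen approximations are exact at every state reached by the frozen greedy policy, whence that policy is optimal. The only (cosmetic) difference is that where the paper derives exactness at revisited points by contradiction (a cut strictly improving on all cuts up to $k_0$ would contradict Lemma~\ref{lemmafinitecuts}), you argue directly that the cut generated at a revisited trial point is tight there and, being an argmax cut, survives any (nonempty) selector satisfying (H3) --- the same inference, and the same implicit reliance on nonempty selectors and a deterministic solver that the paper makes.
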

\begin{proof} Let $\Omega_1$ be defined as in the proof of Lemma \ref{lemmafinitecuts}  and  
let $\Omega_2$ be the event such that $k_0$ defined in Lemma \ref{lemmafinitecuts} is finite.
Note that $\Omega_1 \cap \Omega_2$ has probability $1$.
Consider a realization of CuSMuDA in $\Omega_1 \cap \Omega_2$ corresponding to realizations
$(\tilde \xi^k_{1:T})_k$ of  $(\xi^k_{1:T})_k$ in the forward pass 
and let $(x_1^*, x_2^*( \cdot )$, $\ldots , x_T^*( \cdot ) )$
be the policy obtained from iteration $k_0$ on which uses recourse functions $\mathcal{Q}_{t+1}^{k_0}$ instead of $\mathcal{Q}_{t+1}$.
Recall that policy $(x_1^*, x_2^*( \cdot )$, $\ldots , x_T^*( \cdot ) )$ is optimal if for every realization
${\tilde \xi}_{1:T} := (\xi_1, {\tilde \xi}_2, \ldots,{\tilde \xi}_T)$ of $\xi_{1:T}:=(\xi_1, \xi_2,\ldots,\xi_T)$, we have that $x_t^* (\xi_1, {\tilde \xi}_2, \ldots,{\tilde \xi}_t )$
solves
\begin{equation}\label{optline}
\mathfrak{Q}_t ( x_{t-1}^{*}( {\tilde \xi}_{1:t-1} ), {\tilde \xi}_t   ) =
\displaystyle \inf_{x_t} \{ \langle  {\tilde c}_t ,  x_t \rangle + \mathcal{Q}_{t+1}( x_t) \;:\; x_t \in X_t( x_{t-1}^* ({\tilde \xi}_{1:t-1})  , {\tilde \xi}_t )    \}  
\end{equation}
for every $t=1,\ldots,T$, with the convention that $x_{0}^* = x_0$. 
We prove for $t=1,\ldots,T$, 
$$
\begin{array}{l}
{\overline{\mathcal{H}}}(t): \mbox{ for every }k \geq k_0 \mbox{ and for every sample }{\tilde \xi}_{1:t}=(\xi_1, {\tilde \xi}_2, \ldots,{\tilde \xi}_t) \mbox{ of }(\xi_1,\xi_2,\ldots,\xi_t), \mbox{ we have}\\
\hspace*{1.1cm}{\underline{\mathfrak{Q}}}_t^{k} ( x_{t-1}^{*}( {\tilde \xi}_{1:t-1} ) , {\tilde \xi}_{t}  )= \mathfrak{Q}_t ( x_{t-1}^{*}( {\tilde \xi}_{1:t-1} ) , {\tilde \xi}_t  ).
\end{array}
$$
We show ${\overline{\mathcal{H}}}(1),\ldots,{\overline{\mathcal{H}}}(T)$ by induction.
${\overline{\mathcal{H}}}(T)$ holds since 
${\underline{\mathfrak{Q}}}_T^{k} = \mathfrak{Q}_T$
for every $k$. 
Now assume that ${\overline{\mathcal{H}}}(t+1)$ holds for some $t \in \{1,\ldots,T-1\}$.
We want to show ${\overline{\mathcal{H}}}(t)$.
Take an arbitrary $k \geq k_0$ and a  sample  ${\tilde \xi}_{1:t-1}=(\xi_1,{\tilde \xi}_2, \ldots,{\tilde \xi}_{t-1})$ of
$(\xi_1,\xi_2,\ldots,\xi_{t-1})$. We have for every $j=1,\ldots,M_t$, that
\begin{equation}\label{eqQtapprox}
\begin{array}{lll}
{\underline{\mathfrak{Q}}}_t^{k} ( x_{t-1}^{*}( {\tilde \xi}_{1:t-1} ) , \xi_{t j}  )&=&
\left\{
\begin{array}{l}
\inf \;\langle c_{t j} , x_t \rangle  + \mathcal{Q}_{t+1}^k ( x_t )\\
x_t \in X_t( x_{t-1}^{*}( {\tilde \xi}_{1:t-1} ), \xi_{t j})
\end{array}
\right.\\
&=& \langle c_{t j} ,  x_{t}^{*}( {\tilde \xi}_{1:t-1}, \xi_{t j} ) \rangle + \mathcal{Q}_{t+1}^{k} ( x_{t}^{*}( {\tilde \xi}_{1:t-1}, \xi_{t j} ) ).
\end{array}
\end{equation}
Now we check that for $j=1,\ldots,M_t$, we have
\begin{equation}\label{indcstep}
\mathcal{Q}_{t+1}^{k} ( x_{t}^{*}( {\tilde \xi}_{1:t-1}, \xi_{t j} ) ) = \mathcal{Q}_{t+1}  ( x_{t}^{*}( {\tilde \xi}_{1:t-1}, \xi_{t j} ) ).
\end{equation}
Indeed, if this relation did not hold, since 
$\mathcal{Q}_{t+1}^{k}  \leq \mathcal{Q}_{t+1} $, we would have 
$$
\mathcal{Q}_{t+1}^{k_0} ( x_{t}^{*}( {\tilde \xi}_{1:t-1}, \xi_{t j} ) ) =\mathcal{Q}_{t+1}^{k} ( x_{t}^{*}( {\tilde \xi}_{1:t-1}, \xi_{t j} ) ) < \mathcal{Q}_{t+1}  ( x_{t}^{*}( {\tilde \xi}_{1:t-1}, \xi_{t j} ) ).
$$
From the definitions of $\mathcal{Q}_{t+1}^k, \mathcal{Q}_{t+1}$, there exists $m \in \{1,\ldots,M_{t+1}\}$ such that
$$
\mathfrak{Q}_{t+1}^{k_0}( x_{t}^{*}( {\tilde \xi}_{1:t-1}, \xi_{t j} )  , \xi_{t+1 m})<\mathfrak{Q}_{t+1}( x_{t}^{*}( {\tilde \xi}_{1:t-1}, \xi_{t j} )  , \xi_{t+1 m}).
$$
Since the realization of CuSMuDA is in $\Omega_1$, there exists an infinite set of iterations such that the sampled scenario for stages 
$1,\ldots,t$, is $(\tilde \xi_{1:t-1}, \xi_{t j})$. Let $\ell$ be one of these iterations strictly greater than $k_0$.
Using ${\overline{\mathcal{H}}}(t+1)$, we have that 
$$
{\underline{\mathfrak{Q}}}_{t+1}^{\ell}( x_{t}^{*}( {\tilde \xi}_{1:t-1}, \xi_{t j} )  , \xi_{t+1 m}) =\mathfrak{Q}_{t+1}( x_{t}^{*}( {\tilde \xi}_{1:t-1}, \xi_{t j} )  , \xi_{t+1 m})
$$
which yields
$$
\mathfrak{Q}_{t+1}^{k_0}( x_{t}^{*}( {\tilde \xi}_{1:t-1}, \xi_{t j} )  , \xi_{t+1 m})<{\underline{\mathfrak{Q}}}_{t+1}^{\ell}( x_{t}^{*}( {\tilde \xi}_{1:t-1}, \xi_{t j} )  , \xi_{t+1 m})
$$
and at iteration $\ell>k_0$ we would construct a cut for $\mathfrak{Q}_{t+1}(\cdot, \xi_{t+1 m})$ at $x_{t}^{*}( {\tilde \xi}_{1:t-1}, \xi_{t j} )=x_t^{\ell}$
with value 
$${\underline{\mathfrak{Q}}}_{t+1}^{\ell}( x_{t}^{*}( {\tilde \xi}_{1:t-1}, \xi_{t j} )  , \xi_{t+1 m})$$ strictly larger than the value
at this point of all cuts computed up to iteration $k_0$.
Due to Lemma \ref{lemmafinitecuts}, this is not possible. Therefore, \eqref{indcstep} holds, which, plugged into \eqref{eqQtapprox}, gives
$$
{\underline{\mathfrak{Q}}}_t^{k} ( x_{t-1}^{*}( {\tilde \xi}_{1:t-1} ) , \xi_{t j}  )=
\langle  c_{t j}  ,  x_{t}^{*}( {\tilde \xi}_{1:t-1}, \xi_{t j} ) \rangle + \mathcal{Q}_{t+1} ( x_{t}^{*}( {\tilde \xi}_{1:t-1}, \xi_{t j} ) )
\geq \mathfrak{Q}_t ( x_{t-1}^{*}( {\tilde \xi}_{1:t-1} ) , \xi_{t j}  )
$$
(recall that $x_{t}^{*}( {\tilde \xi}_{1:t-1}, \xi_{t j} ) \in X_t( x_{t-1}^{*}( {\tilde \xi}_{1:t-1}),   \xi_{t j})$).
Since ${\underline{\mathfrak{Q}}}_t^{k} \leq \mathfrak{Q}_t$, we have shown
$ {\underline{\mathfrak{Q}}}_t^{k} ( x_{t-1}^{*}( {\tilde \xi}_{1:t-1} ) , \xi_{t j}  )= \mathfrak{Q}_t ( x_{t-1}^{*}( {\tilde \xi}_{1:t-1} ) , \xi_{t j}  )$
for every $j=1,\ldots,M_t$, which is ${\overline{\mathcal{H}}}(t)$. 
Therefore, we have proved that for $t=1,\ldots,T,$ for every realization $(\tilde \xi_{1:t-1}, \xi_{t j})$ of $\xi_{1:t}$, $x_{t}^{*}( {\tilde \xi}_{1:t-1}, \xi_{t j} )$
satisfies 
$\langle c_{t j} ,   x_{t}^{*}( {\tilde \xi}_{1:t-1}, \xi_{t j} ) \rangle  + \mathcal{Q}_{t+1}( x_{t}^{*}( {\tilde \xi}_{1:t-1}, \xi_{t j} ) ) = \mathfrak{Q}_t ( x_{t-1}^{*}( {\tilde \xi}_{1:t-1} ) , \xi_{t j}  )$,
meaning that for every $j=1,\ldots,M_t$,
$x_{t}^{*}( {\tilde \xi}_{1:t-1}, \xi_{t j} )$
is an optimal solution of \eqref{optline} written with $\tilde \xi_{1:t} = (\tilde \xi_{1:t-1}, \xi_{t j})$ and completes the proof.
\end{proof}
\begin{remark} The convergence proof above also shows the almost sure convergence in a finite
number of iterations for MuDA combined with
cut selection strategies that would always select more cuts
than any cut selection strategy satisfying Assumption (H3).
It shows in particular the convergence of Level $H$ cut selection from \cite{dpcuts0} which
keeps the $H$ cuts having the largest values at each trial point.
\end{remark}
\begin{remark} 
The class of cut selection strategies described in Section \ref{sec:cutselectionalgo} can be straightforwardly combined with SDDP.
The convergence proof of the corresponding variant of SDDP applied to DP equations 
\eqref{firststodp}, \eqref{secondstodp}
can be easily obtained adapting the proofs of Lemma \ref{lemmafinitecuts} and Theorem \ref{convproof}. 
\end{remark}

\section{Application to portfolio selection and inventory management}\label{sec:numsim}

\subsection{Portfolio selection} \label{portfolio}

\subsubsection{Model} \label{portfoliomodel}

We consider a portfolio selection problem with direct transaction costs over a discretized horizon
of $T$ stages. The direct buying and selling  transaction costs are
proportional to the amount of the transaction 
(\cite{bentalmargnem00}, \cite{BEHL}).\footnote{This portfolio problem was solved using SDDP and SREDA (Stochastic REgularized Decomposition
Algorithm) in \cite{guilejtekregsddp}.} 
Let $x_t( i )$ be the dollar value of asset $i=1,\ldots,n+1$ at the end of stage $t=1,\ldots,T$,
where asset $n+1$ is cash; $\xi_t(i)$ is the return of asset $i$ at $t$; 
$y_t(i)$ is the amount of asset $i$ sold at the end of $t$; 
$z_t(i)$ is the amount of asset $i$ bought at the end of $t$, 
$\eta_t(i) > 0$ and $\nu_t(i) > 0$ are respectively the proportional selling and buying transaction costs at $t$.
Each component $x_0(i),i=1,\ldots,n+1$, of $x_0$ is known.
The budget available at the beginning of the investment period is
$\sum_{i=1}^{n+1} \xi_{1}(i) x_{0}(i)$ and
$u(i)$ represents the maximal proportion of money that can be invested in asset $i$. 

For $t=1,\ldots,T$, given a portfolio $x_{t-1}=(x_{t-1}(1),\ldots,x_{t-1}(n), x_{t-1}( n+1) )$ and $\xi_t$, we define the set $X_t(x_{t-1}, \xi_t)$
as the set of $(x_t, y_t, z_t) \in \mathbb{R}^{n+1} \small{\times} \mathbb{R}^{n} \small{\times} \mathbb{R}^{n}$ satisfying
\begin{equation}
x_t( n+1 ) = \xi_{t}( n+1 ) x_{t-1}( n+1 )   +\sum\limits_{i=1}^{n} \Big((1-\eta_t( i) )y_t( i )-  (1+\nu_t ( i ))z_t( i )\Big), \label{p1_3}
\end{equation}
and for $i=1,\ldots,n$,

\begin{subequations}\label{MI-CONS0}
\begin{align}
x_{t}(i)&= \xi_{t}(i) x_{t-1}(i)-y_t(i) +z_t(i),  \label{p1_2}\\
x_t(i) &\leq u(i)  \sum\limits_{j=1}^{n+1} \xi_{t}(j) x_{t-1}(j),  \label{p1_5}\\
x_t(i) &\ge 0, y_t(i) \geq 0, z_t(i) \geq 0.    \label{p1_6}
\end{align}
\end{subequations}
Constraints \eqref{p1_3} are the cash flow balance constraints and define how much cash is available at each stage.
Constraints \eqref{p1_2} define the amount of security $i$ held at each stage $t$ and take into account the proportional transaction costs. 
Constraints \eqref{p1_5} prevent the position in security $i$ at time $t$ from exceeding a proportion $u(i)$.
Constraints \eqref{p1_6} prevent short-selling and enforce the non-negativity of the amounts bought and sold. 

With this notation, the following dynamic programming equations of a risk-neutral portfolio model can be written:
for $t=T$, setting $\mathcal{Q}_{T+1}( x_T ) = \mathbb{E}[ \sum\limits_{i=1}^{n+1} \xi_{T+1}(i) x_{T}(i) ]$
we solve the problem
\begin{equation}\label{eq1dp}
\mathfrak{Q}_T \left( x_{T-1}, \xi_T \right)=
\left\{
\begin{array}{l}
\sup \; \mathcal{Q}_{T+1}( x_T )  \\
(x_T, y_T, z_T) \in X_T(x_{T-1}, \xi_T),
\end{array}
\right.
\end{equation}
while at stage $t=T-1,\dots,1$, we solve
\begin{equation}\label{eq2dp}
\mathfrak{Q}_t\left( x_{t-1}, \xi_t  \right)=
\left\{
\begin{array}{l}
\sup  \;  Q_{t+1}\left( x_{t} \right) \\
(x_t, y_t, z_t) \in X_t(x_{t-1}, \xi_t) , 
\end{array}
\right.
\end{equation}
where for $t=2,\ldots,T$, $\mathcal{Q}_t(x_{t-1})=\mathbb{E}[ \mathfrak{Q}_t\left( x_{t-1}, \xi_t  \right) ]$.
With this model, we maximize the expected return of the portfolio taking into account the transaction costs,
non-negativity constraints, and bounds imposed on the different securities.

\subsubsection{CuSMuDA for portfolio selection}

We assume that the return process $(\xi_t)$ satisfies Assumption (H1).\footnote{It is possible (at the expense of the
computational time) to incorporate stagewise dependant returns within the decomposition algorithms under consideration, for instance
including in the state vector the relevant history of the returns as in \cite{morton, guiguescoap2013}.}
In this setting, we can solve the portfolio problem under consideration using
MuDA and CuSMuDA. For the sake of completeness, we show how to apply MuDA to this problem,
including the stopping criterion (CuSMuDA follows, incorporating one of the pseudo-codes from Figure \ref{figurecut1}). 
In this implementation, $N$ independent scenarios ${\tilde \xi}^k, k=(i-1)N+1,\ldots,iN$, of $(\xi_1,\xi_2,\ldots,\xi_T)$
are sampled in the forward pass of iteration $i$ to obtain $N$ sets of trial points (note that the convergence proof of Theorem \ref{convproof}
still applies for this variant of CuSMuDA). At the end of iteration $i$, we end up with approximate functions
$\mathfrak{Q}_t^i (x_{t-1} , \xi_{t j} ) = \max_{1 \leq \ell \leq iN} \langle \beta_{t j}^\ell ,  x_{t-1} \rangle$ for
$\mathfrak{Q}_t(\cdot, \xi_{t j})$  (observe that for this problem the cuts have no intercept). \\

\par {\textbf{Forward pass of iteration $i$.}} We generate $N$ scenarios 
${\tilde \xi}^k = (\tilde \xi_1^k,, \tilde \xi_2^k,\ldots,\tilde \xi_T^k), k=(i-1)N+1,\ldots,iN$, of $(\xi_2,\ldots,\xi_T)$
and solve for $k=(i-1)N+1,\ldots,iN$, and $t=1,\ldots,T-1$, the problem
$$
\left\{
\begin{array}{l}
\displaystyle \inf_{x_t, f} \; \sum_{\ell =1}^{M_{t+1}} p_{t+1 \ell} f_{\ell}\\
x_t \in X_t ( x_{t-1}^k , {\tilde \xi}_{t}^k  ),\\
f_{\ell} \geq \langle \beta_{t+1 \ell}^{m} ,  x _t\rangle, m=1,\ldots,(i-1)N, \ell =1, \ldots, M_{t+1},
\end{array}
\right.
$$
starting from $(x_{0}^k , {\tilde \xi}_1^k ) = (x_{0} , \xi_1 )$.\footnote{We use minimization instead of maximization subproblems. In this context, the optimal mean income is the opposite of the optimal value of the first stage problem.}
Let $x_{t}^k$ be an optimal solution.

We then sample $S$ independant scenarios of returns and simulate the policy obtained in the end of the 
forward pass on these scenarios with corresponding decisions $(\bar x_1^k,\ldots,\bar x_T^k)$ on scenario $k=1,\ldots,S$.
We
compute 
the empirical mean ${\overline{\tt{Cost}}}^i$ and standard deviation $\sigma^i$ of the
cost on these sampled scenarios:
\begin{equation} \label{formulastopping}
{\overline{\tt{Cost}}}^i = -\frac{1}{S}\sum_{k=1}^{S} \langle \mathbb{E}[\xi_{T+1}] ,  {\bar x}_T^k \rangle, \;\;
\sigma^i = \sqrt{\frac{1}{S} \sum_{k=1}^{S} \Big(-\langle \mathbb{E}[\xi_{T+1}] ,   {\bar x}_T^k \rangle  - {\overline{\tt{Cost}}}^i \Big)^2}. 
\end{equation}
This allows us to compute the upper end $z_{\sup}^i$ of a one-sided confidence interval on the
mean cost of the policy obtained at iteration $i$ given by
\begin{equation}\label{formulazsup}
z_{\sup}^i = {\overline{\tt{Cost}}}^i + \frac{\sigma^i}{\sqrt{S}} \Phi^{-1}(1-\alpha)
\end{equation}
where $\Phi^{-1}(1-\alpha)$ is the $(1-\alpha)$-quantile of the standard Gaussian distribution.\\
\par {\textbf{Backward pass of iteration $i$.}} For $k=(i-1)N+1,\ldots,iN$, we solve for $t=T$, $j=1,\ldots,M_T$,
\begin{equation}\label{back1equ}
\left\{
\begin{array}{l}
\inf  \;- \langle  x_T ,  \mathbb{E}[\xi_{T+1}] \rangle \\
x_T \in X_T ( x_{T-1}^k , \xi_{T j}  )
\end{array}
\right.
\end{equation}
and for $t=T-1,\ldots,2$, $j=1,\ldots,M_t$,
\begin{equation}\label{back2equ}
\left\{
\begin{array}{l}
\displaystyle \inf_{x_t, f}  \; \sum_{\ell=1}^{M_{t+1}} p_{t+1 \ell} f_{\ell} \\
x_t \in X_t( x_{t-1}^k , \xi_{t j}  ),\\
f_{\ell} \geq \langle \beta_{t+1 \ell}^{m} ,  x _t \rangle, m=1,\ldots,iN, \ell=1,\ldots,M_{t+1}.
\end{array}
\right.
\end{equation}
For stage $t$ problem above with realization $\xi_{t j}$ of $\xi_t$ (problem \eqref{back1equ} for $t=T$ and \eqref{back2equ} for $t<T$), let
$\lambda_{t j}^k$ be the optimal Lagrange multipliers associated to the equality constraints
and let $\mu_{t j}^k \geq 0$ be the optimal Lagrange multipliers associated with constraint
$x_t(i) \leq u(i)  \sum\limits_{\ell=1}^{n+1} \xi_{t j}(\ell) x_{t-1}(\ell)$. We compute
$$
\beta_{t j}^k = \Big( \lambda_{t j}^k - \langle u, \mu_{t j}^k \rangle {\textbf{e}} \Big) \circ \xi_{t j},
$$
where ${\textbf{e}}$ is a vector in $\mathbb{R}^{n+1}$ of ones and 
where for vectors $x, y$, the vector $x \circ y$ has components $(x \circ y)(i)=x(i) y(i)$.\\  
\par {\textbf{Stopping criterion (see \cite{shapsddp}).}}
At the end of the backward pass of iteration $i$, we solve
$$
\left\{
\begin{array}{l}
\displaystyle \inf_{x_1, f}  \;  \sum_{\ell=1}^{M_2} p_{2 \ell} f_{\ell}\\
x_1 \in X_1( x_{0} , \xi_1  ),\\
f_{\ell} \geq
\langle \beta_{2 \ell}^{m} ,  x _1\rangle, m=1,\ldots,iN, \ell=1,\ldots,M_2,
\end{array}
\right.
$$
whose optimal value provides a lower bound $z_{\inf}^i$ on the optimal value
$\mathcal{Q}_1( x_0 )$ of the problem. 
Given a tolerance $\varepsilon>0$, 
the algorithm stops either when $z_{\inf}^i=0$ and $z_{\sup}^i \leq \varepsilon$ or when
\begin{equation}\label{stoppingcriterion} 
\left| z_{\sup}^i - z_{\inf}^i   \right| \leq \varepsilon \max(1, |z_{\sup}^i |  ).
\end{equation}
In the expression above, we use  $\varepsilon \max(1, |z_{\sup}^i |  )$ instead of $\varepsilon |z_{\sup}^i |$ in the right-hand side to account for the case
$z_{\sup}^i=0$.

\subsubsection{Numerical results}

\par {\textbf{Problem data.}} We compare six methods to solve the porfolio problem presented in Section \ref{portfoliomodel}:
MuDA with sampling that we have just described (denoted by {\tt{MuDA}} for short), SDDP, CuSMuDA and SDDP 
with Level 1 cut selection (denoted by {\tt{CuSMuDA CS 1}} and {\tt{SDDP CS 1}} respectively for short), 
and CuSMuDA and SDDP with Limited Memory Level 1 cut selection  
(denoted by {\tt{CuSMuDA CS 2}} and {\tt{SDDP CS 2}} for short). 
The implementation was done in Matlab run on a laptop with Intel(R) Core(TM) i7-4510U CPU @ 2.00GHz.
All subproblems in the forward and backward passes were solved numerically using Mosek Optimization Toolbox \cite{mosek}.

We fix
$u(i)=1, i=1,\ldots,n$, while $x_0$ has components uniformly distributed in $[0,10]$.
For the stopping criterion, we use $\alpha=0.025$,
$\varepsilon=0.1$, and test two values of $N$, namely $N=1$ and $N=200$ in \eqref{stoppingcriterion}.
Below, we generate various instances of the portfolio problem as follows.
For fixed $T$ (number of stages [days for our experiment]) and $n$ (number of risky assets),
the distributions of $\xi_t(1:n)$ have $M_t=M$ realizations 
with $p_{t i}=\mathbb{P}(\xi_t = \xi_{t i})=1/M$, and
$\xi_1(1:n), \xi_{t 1}(1:n), \ldots, \xi_{t M}(1:n)$ chosen randomly among historical data of
daily returns of $n$ of the assets of the S\&P 500 index for the period 18/5/2009-28/5/2015.
These $n$ assets correspond 
to the first $n$ stocks listed in our matrix of stock prices downloaded from Wharton Research Data 
Services (WRDS: {\url{https://wrds-web.wharton.upenn.edu/wrds/}}).
The daily return $\xi_t(n+1)$ of the risk-free asset is $0.1$\% for all $t$.

Transaction costs are assumed to be known with
$\nu_t(i)=\mu_t(i)$
obtained sampling from the distribution of the random variable 
$0.08+0.06\cos(\frac{2\pi}{T} U_T )$ where $U_T$ is a random variable
with a discrete distribution over the set of integers $\{1,2,\ldots,T\}$.\\

\par {\textbf{Results.}} The computational time and number of iterations required   for solving 
22 instances of the portfolio problem for several values of
parameters $(M,T,n,N,S)$
is given in Tables \ref{tablerunningtime1}
and \ref{tablerunningtime2}.

On all instances except Instance 15 (with $M=2, T=6, n=10$),
{\tt{MuDA}} is much slower than {\tt{SDDP}}, i.e., needs 
much more time to satisfy the stopping criterion.\footnote{We considered in particular instances where $M$ is less than 
the number $2n+1$ of constraints of the subproblems to give 
MuDA a chance (according to \cite{birge1996}, cases where MuDA may be competitive with (single cut)
SDDP satisfy this requirement).}
On most instances, both variants with cut selection are quicker than their counterpart
without cut selection (on 17 instances out of 22 for SDDP and 12 out of 22 for
MuDA) and LML 1 is more efficient than Level 1 (on 18 out of 22 instances for SDDP and
19 out of 22 instances for MuDA). More precisely, both for SDDP and MuDA, we observe three different patterns:
\begin{itemize}
\item[(P1)] Instances where variants with cut selection require comparable computational bulk 
but are quicker than their counterpart without cut selection.
There are 9 of these instances for SDDP (Instances 1, 4, 7, 8, 9, 13, 15, 16, and 17)
and 8 of these instances for MuDA (Instances 1, 2, 4, 6, 13, 14, 15, and 16).
\item[(P2)] Instances where one variant with cut selection (in general LML 1) is much quicker 
than the other one, both of them being quicker than their counterpart without cut selection.
There are 8 of these instances for SDDP (Instances 2, 5, 6, 10, 11, 12, 14, 18)
and 4 of these instances for MuDA (Instances 8, 9, 10, 11).
\item[(P3)] Instances where at least one variant with cut selection 
(in  general Level 1) is much slower than its counterpart without cut selection.
There are 5 of these instances for SDDP (Instances 3, 19, 20, 21, 22) and 10 of these instances
for MuDA (3, 5, 7, 12, 17, 18, 19, 20, 21, 22).
\end{itemize}

To understand the impact of the cut selection strategies on the computational time,
it is useful to analyze the number of iterations and the 
proportion of cuts selected along the iterations of the algorithm
by the variants with cut selection in cases (P1), (P2), and (P3) described above.

As examples of instances of type (P1), consider Instance 1 for SDDP and MuDA and
Instance 2 for MuDA.

{\small{
\begin{table}[H]
\centering
\begin{tabular}{|c|}
\hline
{\tt{Instance 1:}} $M=3$, $T=48$, $n=50$, $N=1$, $S=200$\\
\hline
\end{tabular}
\begin{tabular}{|c||c|c|c|c|c|c|}
\hline
& {\tt{SDDP}} & {\tt{SDDP CS 1}}  & {\tt{SDDP CS 2}} & {\tt{MuDA}} & {\tt{CuSMuDA CS 1}}  & {\tt{CuSMuDA CS 2}} \\
\hline
CPU time & 8 750 &  2 857    & 3 201  &  22 857  &  6 800   & 6 964 \\
\hline
Iteration & 1152  &   1168 & 1286 & 1205 &1176  & 1203  \\
\hline
\end{tabular}
\begin{tabular}{|c|}
\hline
{\tt{Instance 2:}} $M=50$, $T=4$, $n=500$, $N=1$, $S=50$\\
\hline
\end{tabular}
\begin{tabular}{|c||c|c|c|c|c|c|}
\hline
& {\tt{SDDP}} & {\tt{SDDP CS 1}}  & {\tt{SDDP CS 2}} & {\tt{MuDA}} & {\tt{CuSMuDA CS 1}}  & {\tt{CuSMuDA CS 2}} \\
\hline
CPU time & 1726.1 &    727.0  &  517.1 & 3 779.0   &  3 405   & 3 348 \\
\hline
Iteration & 200  &  200  & 200 & 41 & 42 & 50  \\
\hline
\end{tabular}
\begin{tabular}{|c|}
\hline
{\tt{Instance 3:}} $M=5$, $T=12$, $n=5$, $N=200$, $S=200$\\
\hline
\end{tabular}
\begin{tabular}{|c||c|c|c|c|c|c|}
\hline
& {\tt{SDDP}} & {\tt{SDDP CS 1}}  & {\tt{SDDP CS 2}} & {\tt{MuDA}} & {\tt{CuSMuDA CS 1}}  & {\tt{CuSMuDA CS 2}} \\
\hline
CPU time &  506.4  &  1 378.2    & 193.2   &   799.8 &   3053.5   & 168.9  \\
\hline
Iteration &  9 &  11   & 7  & 6   &   8   & 7\\
\hline
\end{tabular}
\begin{tabular}{|c|}
\hline
{\tt{Instance 4:}} $M=2$, $T=6$, $n=200$, $N=1$, $S=50$\\
\hline
\end{tabular}
\begin{tabular}{|c||c|c|c|c|c|c|}
\hline
& {\tt{SDDP}} & {\tt{SDDP CS 1}}  & {\tt{SDDP CS 2}} & {\tt{MuDA}} & {\tt{CuSMuDA CS 1}}  & {\tt{CuSMuDA CS 2}} \\
\hline
CPU time & 25.5 & 22.2    &21.0 &  43.3  & 34.4    & 34.4 \\
\hline
Iteration &  83 & 97   & 94 & 82 & 93 &  92 \\
\hline
\end{tabular}
\begin{tabular}{|c|}
\hline
{\tt{Instance 5:}} $M=5$, $T=12$, $n=5$, $N=1$, $S=200$\\
\hline
\end{tabular}
\begin{tabular}{|c||c|c|c|c|c|c|}
\hline
& {\tt{SDDP}} & {\tt{SDDP CS 1}}  & {\tt{SDDP CS 2}} & {\tt{MuDA}} & {\tt{CuSMuDA CS 1}}  & {\tt{CuSMuDA CS 2}} \\
\hline
CPU time & 22.0   &  21.9    & 16.9   &  42.2  & 104.5     & 24.0   \\
\hline
Iteration &  200 & 200     &  200 &  200  & 200     & 200\\
\hline
\end{tabular}
\begin{tabular}{|c|}
\hline
{\tt{Instance 6:}} $M=5$, $T=12$, $n=10$, $N=1$, $S=200$\\
\hline
\end{tabular}
\begin{tabular}{|c||c|c|c|c|c|c|}
\hline
& {\tt{SDDP}} & {\tt{SDDP CS 1}}  & {\tt{SDDP CS 2}} & {\tt{MuDA}} & {\tt{CuSMuDA CS 1}}  & {\tt{CuSMuDA CS 2}} \\
\hline
CPU time & 34.1   &  25.7    & 19.5   &  76.6  & 43.8     & 32.7   \\
\hline
Iteration &  200 & 200     &  200 &  200  & 200     & 200\\
\hline
\end{tabular}
\begin{tabular}{|c|}
\hline
{\tt{Instance 7:}} $M=10$, $T=6$, $n=10$, $N=1$, $S=200$\\
\hline
\end{tabular}
\begin{tabular}{|c||c|c|c|c|c|c|}
\hline
& {\tt{SDDP}} & {\tt{SDDP CS 1}}  & {\tt{SDDP CS 2}} & {\tt{MuDA}} & {\tt{CuSMuDA CS 1}}  & {\tt{CuSMuDA CS 2}} \\
\hline
CPU time & 53.7   &  17.3    & 14.8   &  90.3  & 143.0     & 26.2   \\
\hline
Iteration &  200 & 200     &  200 &  200  & 200     & 200\\
\hline
\end{tabular}
\begin{tabular}{|c|}
\hline
{\tt{Instance 8:}} $M=20$, $T=6$, $n=5$, $N=1$, $S=200$\\
\hline
\end{tabular}
\begin{tabular}{|c||c|c|c|c|c|c|}
\hline
& {\tt{SDDP}} & {\tt{SDDP CS 1}}  & {\tt{SDDP CS 2}} & {\tt{MuDA}} & {\tt{CuSMuDA CS 1}}  & {\tt{CuSMuDA CS 2}} \\
\hline
CPU time & 33.7   &  26.5    & 23.9   &  214.5  & 192.3     & 50.3   \\
\hline
Iteration &  200 & 200     &  200 &  200  & 200     & 200\\
\hline
\end{tabular}
\begin{tabular}{|c|}
\hline
{\tt{Instance 9:}} $M=50$, $T=12$, $n=10$, $N=1$, $S=200$\\
\hline
\end{tabular}
\begin{tabular}{|c||c|c|c|c|c|c|}
\hline
& {\tt{SDDP}} & {\tt{SDDP CS 1}}  & {\tt{SDDP CS 2}} & {\tt{MuDA}} & {\tt{CuSMuDA CS 1}}  & {\tt{CuSMuDA CS 2}} \\
\hline
CPU time & 340.7   &  181.8    & 158.8   &  3 649.4  & 2 541.8     & 835.3   \\
\hline
Iteration &  200 & 200     &  200 &  179  & 200     & 200\\
\hline
\end{tabular}
\begin{tabular}{|c|}
\hline
{\tt{Instance 10:}} $M=10$, $T=6$, $n=10$, $N=200$, $S=200$\\
\hline
\end{tabular}
\begin{tabular}{|c||c|c|c|c|c|c|}
\hline
& {\tt{SDDP}} & {\tt{SDDP CS 1}}  & {\tt{SDDP CS 2}} & {\tt{MuDA}} & {\tt{CuSMuDA CS 1}}  & {\tt{CuSMuDA CS 2}} \\
\hline
CPU time & 256.3   &  208.3    & 84.8   &  2 321.5  & 1 185.4     & 191.8  \\
\hline
Iteration &  5 & 5    &  5 &  6  & 5     & 5\\
\hline
\end{tabular}
\begin{tabular}{|c|}
\hline
{\tt{Instance 11:}} $M=20$, $T=6$, $n=5$, $N=200$, $S=200$\\
\hline
\end{tabular}
\begin{tabular}{|c||c|c|c|c|c|c|}
\hline
& {\tt{SDDP}} & {\tt{SDDP CS 1}}  & {\tt{SDDP CS 2}} & {\tt{MuDA}} & {\tt{CuSMuDA CS 1}}  & {\tt{CuSMuDA CS 2}} \\
\hline
CPU time & 250.9   &  189.1    & 72.0   &  2 721.3  & 1 838.3     & 232.6 \\
\hline
Iteration &  4 & 4    &  3 &  4  & 4     & 4\\
\hline
\end{tabular}
\caption{Computational time (in seconds) and number of iterations for solving instances of the portfolio problem of Section \ref{portfoliomodel} with 
{\tt{SDDP}}, {\tt{SDDP CS 1}}, {\tt{SDDP CS 2}}, {\tt{MuDA}}, {\tt{CuSMuDA CS 1}}, and {\tt{CuSMuDA CS 2}}.}
\label{tablerunningtime1}
\end{table}
}}


{\small{
\begin{table}[H]
\centering
\begin{tabular}{|c|}
\hline
{\tt{Instance 12:}} $M=2$, $T=6$, $n=450$, $N=1$, $S=50$\\
\hline
\end{tabular}
\begin{tabular}{|c||c|c|c|c|c|c|}
\hline
& {\tt{SDDP}} & {\tt{SDDP CS 1}}  & {\tt{SDDP CS 2}} & {\tt{MuDA}} & {\tt{CuSMuDA CS 1}}  & {\tt{CuSMuDA CS 2}} \\
\hline
CPU time & 175.3 & 117.7    &153.7 &  235.6  & 351.7    & 183.7 \\
\hline
Iteration &  158 & 154   &173 & 128 & 181 &  168 \\
\hline
\end{tabular}
\begin{tabular}{|c|}
\hline
{\tt{Instance 13:}} $M=2$, $T=12$, $n=50$, $N=1$, $S=50$\\
\hline
\end{tabular}
\begin{tabular}{|c||c|c|c|c|c|c|}
\hline
& {\tt{SDDP}} & {\tt{SDDP CS 1}}  & {\tt{SDDP CS 2}} & {\tt{MuDA}} & {\tt{CuSMuDA CS 1}}  & {\tt{CuSMuDA CS 2}} \\
\hline
CPU time & 46.3 & 21.5    & 21.1 &  103.2  & 31.7     & 36.8 \\
\hline
Iteration &  122 & 137   &132 & 125 & 127 &  137 \\
\hline
\end{tabular}
\begin{tabular}{|c|}
\hline
{\tt{Instance 14:}} $M=2$, $T=6$, $n=10$, $N=1$, $S=10$\\
\hline
\end{tabular}
\begin{tabular}{|c||c|c|c|c|c|c|}
\hline
& {\tt{SDDP}} & {\tt{SDDP CS 1}}  & {\tt{SDDP CS 2}} & {\tt{MuDA}} & {\tt{CuSMuDA CS 1}}  & {\tt{CuSMuDA CS 2}} \\
\hline
CPU time & 0.52 & 0.35    & 0.45 &  0.38  & 0.37     & 0.34 \\
\hline
Iteration &  13 & 13   &13&11&13&13 \\
\hline
\end{tabular}
\begin{tabular}{|c|}
\hline
{\tt{Instance 15:}} $M=3$, $T=36$, $n=30$, $N=1$, $S=200$\\
\hline
\end{tabular}
\begin{tabular}{|c||c|c|c|c|c|c|}
\hline
& {\tt{SDDP}} & {\tt{SDDP CS 1}}  & {\tt{SDDP CS 2}} & {\tt{MuDA}} & {\tt{CuSMuDA CS 1}}  & {\tt{CuSMuDA CS 2}} \\
\hline
CPU time & 1029   &  341    & 394   &  1 710  & 704     & 634 \\
\hline
Iteration &  454 & 467    & 489 &  436  & 463     & 462\\
\hline
\end{tabular}
\begin{tabular}{|c|}
\hline
{\tt{Instance 16:}} $M=3$, $T=48$, $n=10$, $N=1$, $S=50$\\
\hline
\end{tabular}
\begin{tabular}{|c||c|c|c|c|c|c|}
\hline
& {\tt{SDDP}} & {\tt{SDDP CS 1}}  & {\tt{SDDP CS 2}} & {\tt{MuDA}} & {\tt{CuSMuDA CS 1}}  & {\tt{CuSMuDA CS 2}} \\
\hline
CPU time & 56.8   &  44.2    & 39.3   &  75.4  & 46.1     & 39.3 \\
\hline
Iteration &  100 & 139    &  124 &  100  & 99     & 98\\
\hline
\end{tabular}
\begin{tabular}{|c|}
\hline
{\tt{Instance 17:}} $M=20$, $T=5$, $n=4$, $N=200$, $S=200$\\
\hline
\end{tabular}
\begin{tabular}{|c||c|c|c|c|c|c|}
\hline
& {\tt{SDDP}} & {\tt{SDDP CS 1}}  & {\tt{SDDP CS 2}} & {\tt{MuDA}} & {\tt{CuSMuDA CS 1}}  & {\tt{CuSMuDA CS 2}} \\
\hline
CPU time &  103.6  &  88.2    &   81.2 & 538.2  &   807   & 78.6 \\
\hline
Iteration &  3 & 4    & 4  &  2  &  2    & 2 \\
\hline
\end{tabular}
\begin{tabular}{|c|}
\hline
{\tt{Instance 18:}} $M=20$, $T=5$, $n=5$, $N=200$, $S=200$\\
\hline
\end{tabular}
\begin{tabular}{|c||c|c|c|c|c|c|}
\hline
& {\tt{SDDP}} & {\tt{SDDP CS 1}}  & {\tt{SDDP CS 2}} & {\tt{MuDA}} & {\tt{CuSMuDA CS 1}}  & {\tt{CuSMuDA CS 2}} \\
\hline
CPU time & 30.9  &  26.7    &  17.8  & 1021.8  &    1764.6  &  142.8\\
\hline
Iteration &  1 &   1  & 1  & 3   &   3   & 3 \\
\hline
\end{tabular}
\begin{tabular}{|c|}
\hline
{\tt{Instance 19:}} $M=20$, $T=5$, $n=6$, $N=200$, $S=200$\\
\hline
\end{tabular}
\begin{tabular}{|c||c|c|c|c|c|c|}
\hline
& {\tt{SDDP}} & {\tt{SDDP CS 1}}  & {\tt{SDDP CS 2}} & {\tt{MuDA}} & {\tt{CuSMuDA CS 1}}  & {\tt{CuSMuDA CS 2}} \\
\hline
CPU time &  132.2  &   273.2   & 127.8   &  2 752.2 &  3 853.2    & 315 \\
\hline
Iteration &  3 &   5  &  5 & 5   &  5    & 5 \\
\hline
\end{tabular}
\begin{tabular}{|c|}
\hline
{\tt{Instance 20:}} $M=10$, $T=8$, $n=4$, $N=200$, $S=200$\\
\hline
\end{tabular}
\begin{tabular}{|c||c|c|c|c|c|c|}
\hline
& {\tt{SDDP}} & {\tt{SDDP CS 1}}  & {\tt{SDDP CS 2}} & {\tt{MuDA}} & {\tt{CuSMuDA CS 1}}  & {\tt{CuSMuDA CS 2}} \\
\hline
CPU time & 134.7   &   177.9   & 67.4   & 476.4  &  976.2    & 93.6 \\
\hline
Iteration &  4 &   4  &  4 &  3  &   3   & 3 \\
\hline
\end{tabular}
\begin{tabular}{|c|}
\hline
{\tt{Instance 21:}} $M=10$, $T=8$, $n=5$, $N=200$, $S=200$\\
\hline
\end{tabular}
\begin{tabular}{|c||c|c|c|c|c|c|}
\hline
& {\tt{SDDP}} & {\tt{SDDP CS 1}}  & {\tt{SDDP CS 2}} & {\tt{MuDA}} & {\tt{CuSMuDA CS 1}}  & {\tt{CuSMuDA CS 2}} \\
\hline
CPU time &  264.7  & 297.0     & 90.9   & 2 603.4  &  4 438.8    & 258 \\
\hline
Iteration &  5 &    5 & 4  &  6  &  5    & 7 \\
\hline
\end{tabular}
\begin{tabular}{|c|}
\hline
{\tt{Instance 22:}} $M=10$, $T=8$, $n=6$, $N=200$, $S=200$\\
\hline
\end{tabular}
\begin{tabular}{|c||c|c|c|c|c|c|}
\hline
& {\tt{SDDP}} & {\tt{SDDP CS 1}}  & {\tt{SDDP CS 2}} & {\tt{MuDA}} & {\tt{CuSMuDA CS 1}}  & {\tt{CuSMuDA CS 2}} \\
\hline
CPU time &  256.7  &  312.7    & 162.1   & 3 808.2  & 6 633     &  303\\
\hline
Iteration &  5 &   5  &  6 &  7  &   9   &  6\\
\hline
\end{tabular}
\caption{Computational time (in seconds) and number of iterations for solving instances of the portfolio problem of Section \ref{portfoliomodel} with 
{\tt{SDDP}}, {\tt{SDDP CS 1}}, {\tt{SDDP CS 2}}, {\tt{MuDA}}, {\tt{CuSMuDA CS 1}}, and {\tt{CuSMuDA CS 2}}.}
\label{tablerunningtime2}
\end{table}
}}
For Instance 1, the mean proportion (over the iterations of the algorithm) of cuts selected
by all variants with cut selection, i.e., {\tt{SDDP CS 1}}, {\tt{SDDP CS 2}}, {\tt{CuSMuDA CS 1}}, and {\tt{CuSMuDA CS 2}}, 
tends to increase with the stage and is 
very low for nearly all stages (at most 0.1 until stage 30 and at most 0.2 until stage 40, knowing
that there are $T=48$ stages for that instance); see the bottom plot of Figure \ref{fig:f_1}
which represents the evolution of the mean proportion of cuts selected as a function of the stage.
This makes sense since at the last stage, the cuts for functions $\mathfrak{Q}_T(\cdot, \xi_{T j})$ are exact (and as we recalled, all of them
are selected with  {\tt{CuSMuDA CS 1}}) but not necessarily for stages $t<T$ because for these stages approximate recourse functions
are used and the approximation errors propagate backward. Therefore it is expected that at the early stages old cuts (computed at the first
iterations using crude approximations of the recourse functions) will probably not be selected.
Additionally, another reason why fewer cuts are selected in earlier stages may come from the fact that there are fewer distinct sampled points 
in the state-space.

Moreover the proportion of cuts selected is very similar for all variants.

Since the degradation in the upper and lower bound for variants with cut selection is very limited (see the evolution of the 
upper and lower bounds along iterations for all methods on the top right plot of Figure \ref{fig:f_1}), 
the number of iterations required to solve Instance 1 by these variants and their counterpart without cut selection
is similar.\footnote{We see in particular that the bounds for all algorithms are very close to each
other at the last iteration and that, as expected,
the lower bounds increase and the upper bounds 
tend to decrease along iterations.
If we do not know the optimal value $\mathcal{Q}_1( x_0)$, these observations 
(that we checked on all instances) are a good indication that 
all algorithms were correctly implemented.}
\begin{figure}[H]
\centering
\begin{tabular}{cc}
\includegraphics[scale=0.5]{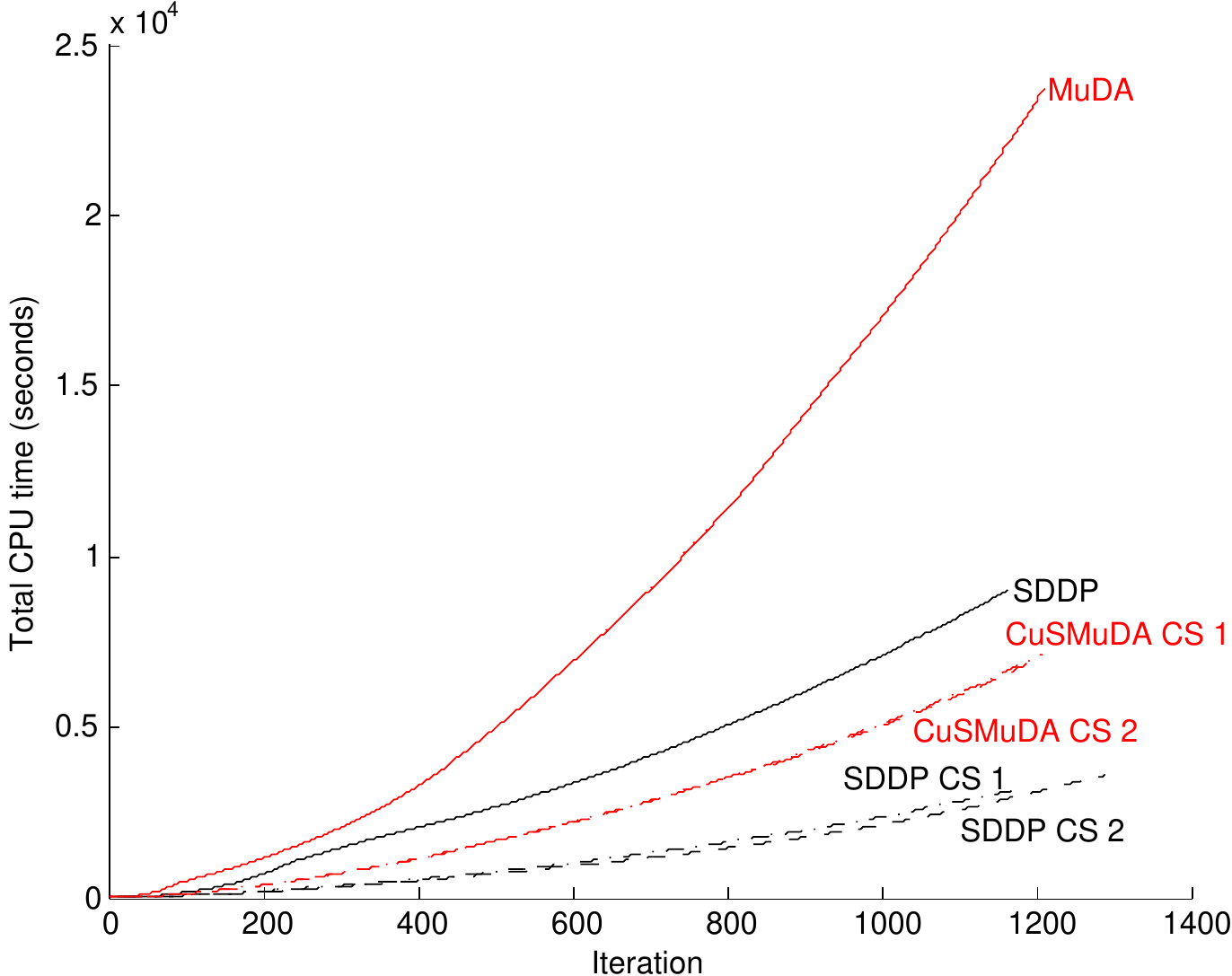}&
\includegraphics[scale=0.5]{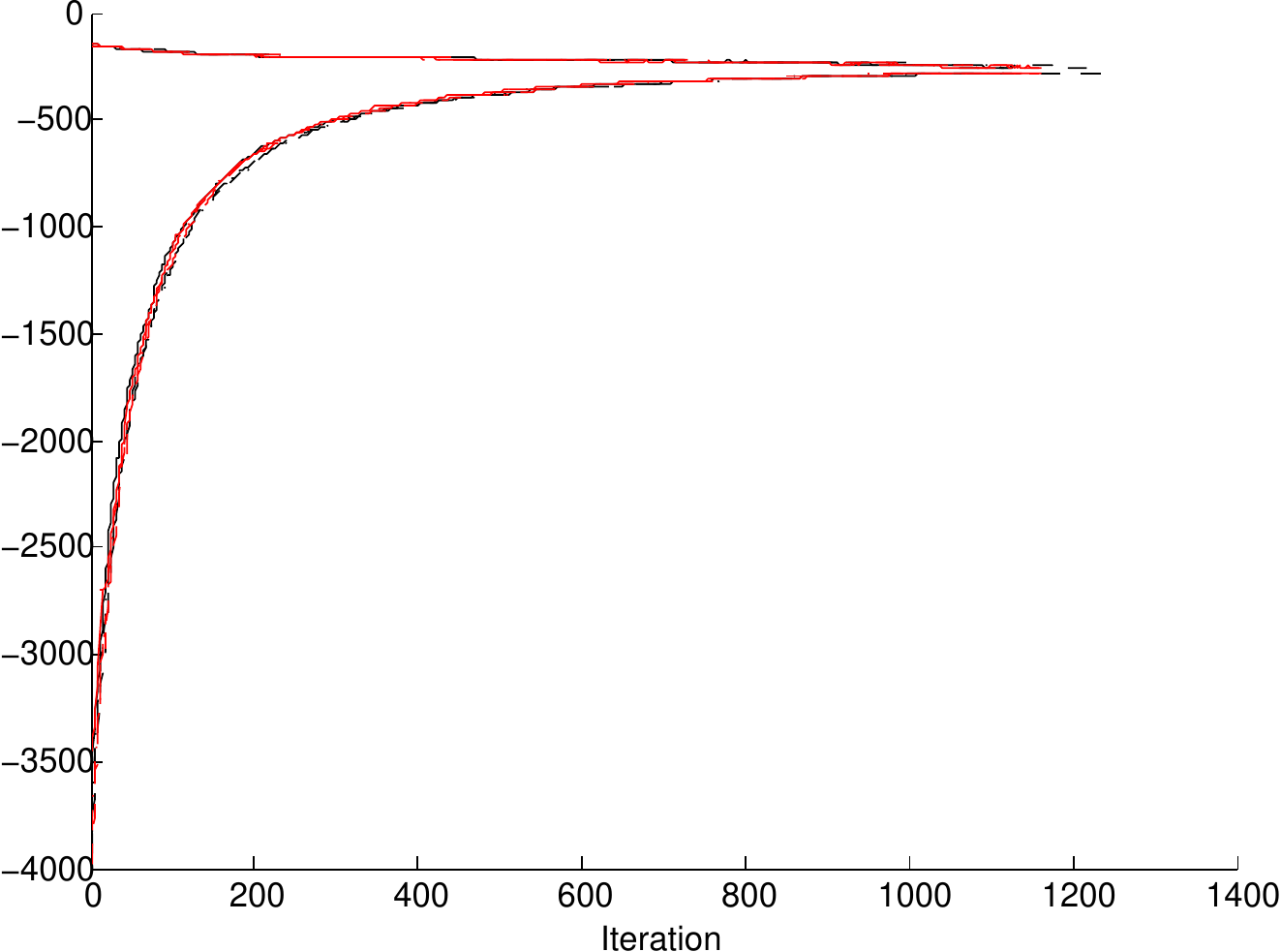}
\end{tabular}
\begin{tabular}{c}
\includegraphics[scale=0.5]{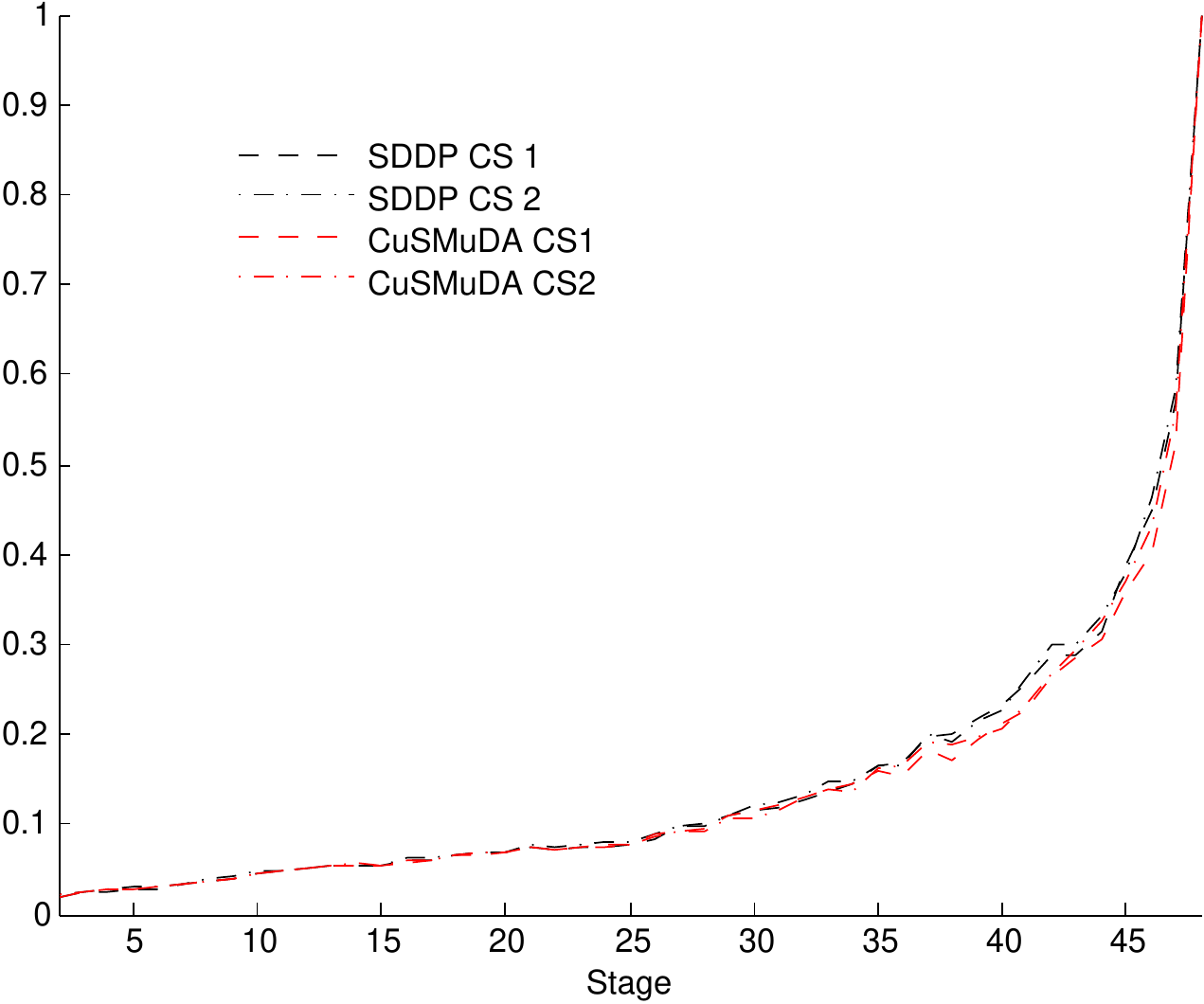}
\end{tabular}
\caption{Top left: total CPU time (in seconds) as a function of the 
number of  iterations for {\tt{SDDP}}, {\tt{SDDP CS 1}}, {\tt{SDDP CS 2}},
{\tt{MuDA}}, {\tt{CuSMuDA CS 1}}, and {\tt{CuSMuDA CS 2}}
to solve Instance 1.
Top right: evolution of the upper bounds $z_{\sup}^i$ and lower bounds $z_{\inf}^i$ along the iterations
of {\tt{SDDP}}, {\tt{SDDP CS 1}}, {\tt{SDDP CS 2}},
{\tt{MuDA}}, {\tt{CuSMuDA CS 1}}, and {\tt{CuSMuDA CS 2}}
to solve Instance 1 (no legend is given on this plot since all curves are almost identical).
Bottom: mean proportion of cuts (over the iterations of the algorithm) selected
for stages $t=2,\ldots,T=48$ by {\tt{SDDP CS 1}}, {\tt{SDDP CS 2}}, {\tt{CuSMuDA CS 1}}, and {\tt{CuSMuDA CS 2}}
to solve Instance 1.\label{fig:f_1}}
\end{figure}

Therefore, variants with cut selection are much quicker and 
the quickest variant with cut selection is the one requiring the least number of iterations; see the top left
plot of Figure \ref{fig:f_1} which plots the total CPU time as a function of the number of iterations.

For Instance 2 solved by MuDA and its variants with cut selection, we refer to Figure \ref{fig:f_2}.
We see on this figure that these variants again select a very small proportion of cuts for stages
2 and 3 (here, the number of stages is $T=4$) and that this proportion increases with the stage. The degradation in 
the lower bound for variants with cut selection
is larger than with Instance 1 but is still very limited.
Therefore, variants with cut selection are much quicker.
{\tt{CuSMuDA CS 2}} requires less cuts than {\tt{CuSMuDA CS 1}} but more iterations 
and CPU time for both variants is similar.\\

\par To explain patterns (P2) and (P3), we consider Instance 2 for SDDP, of type (P2),
Instance 3 for SDDP and MuDA, of type (P3), and refer 
to Figures \ref{fig:f_2} and \ref{fig:lastfig} which are the analogues of Figure \ref{fig:f_1} for
Instances 2 and 3. 

\begin{figure}[H]
\centering
\begin{tabular}{cc}
\includegraphics[scale=0.55]{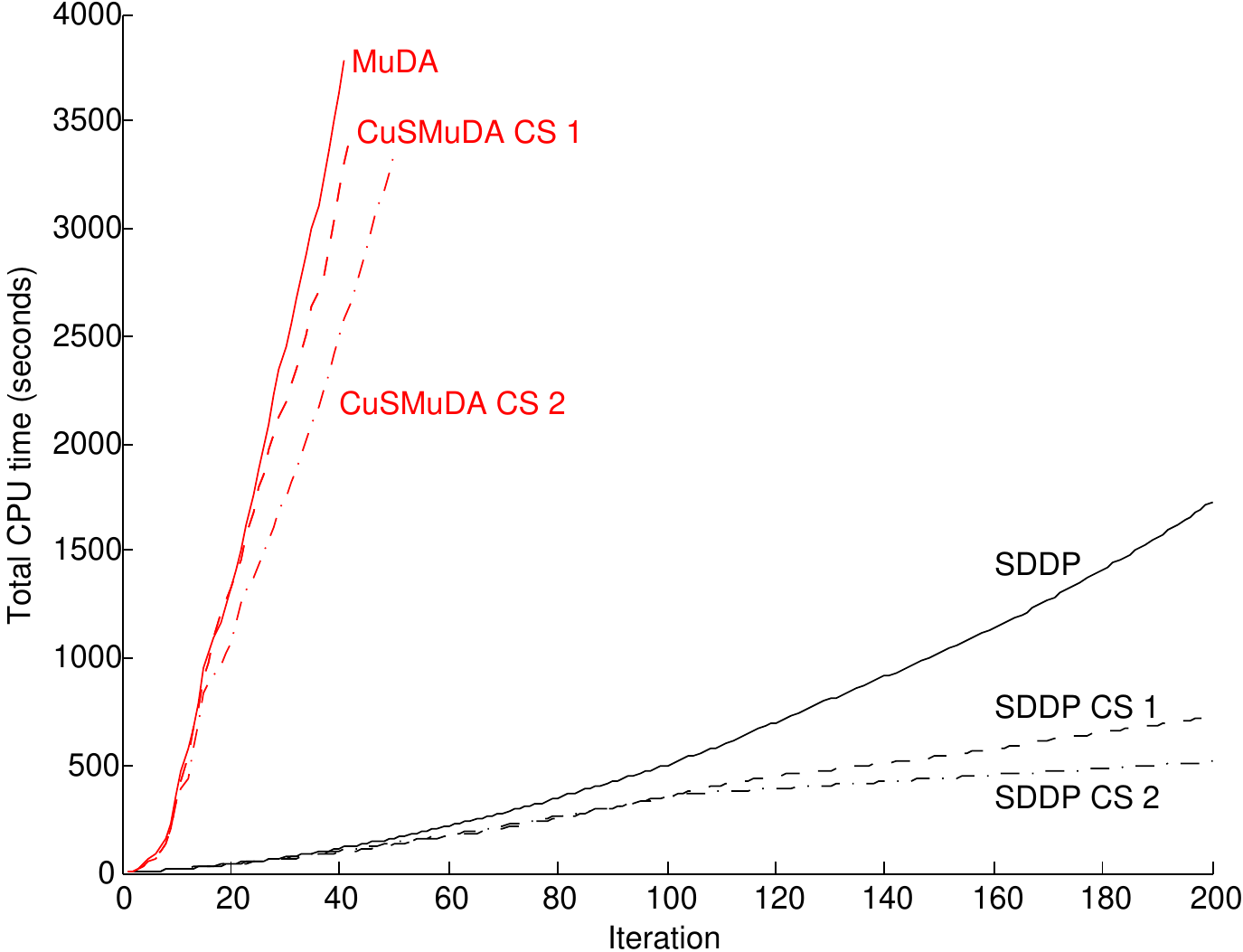}& 
\includegraphics[scale=0.55]{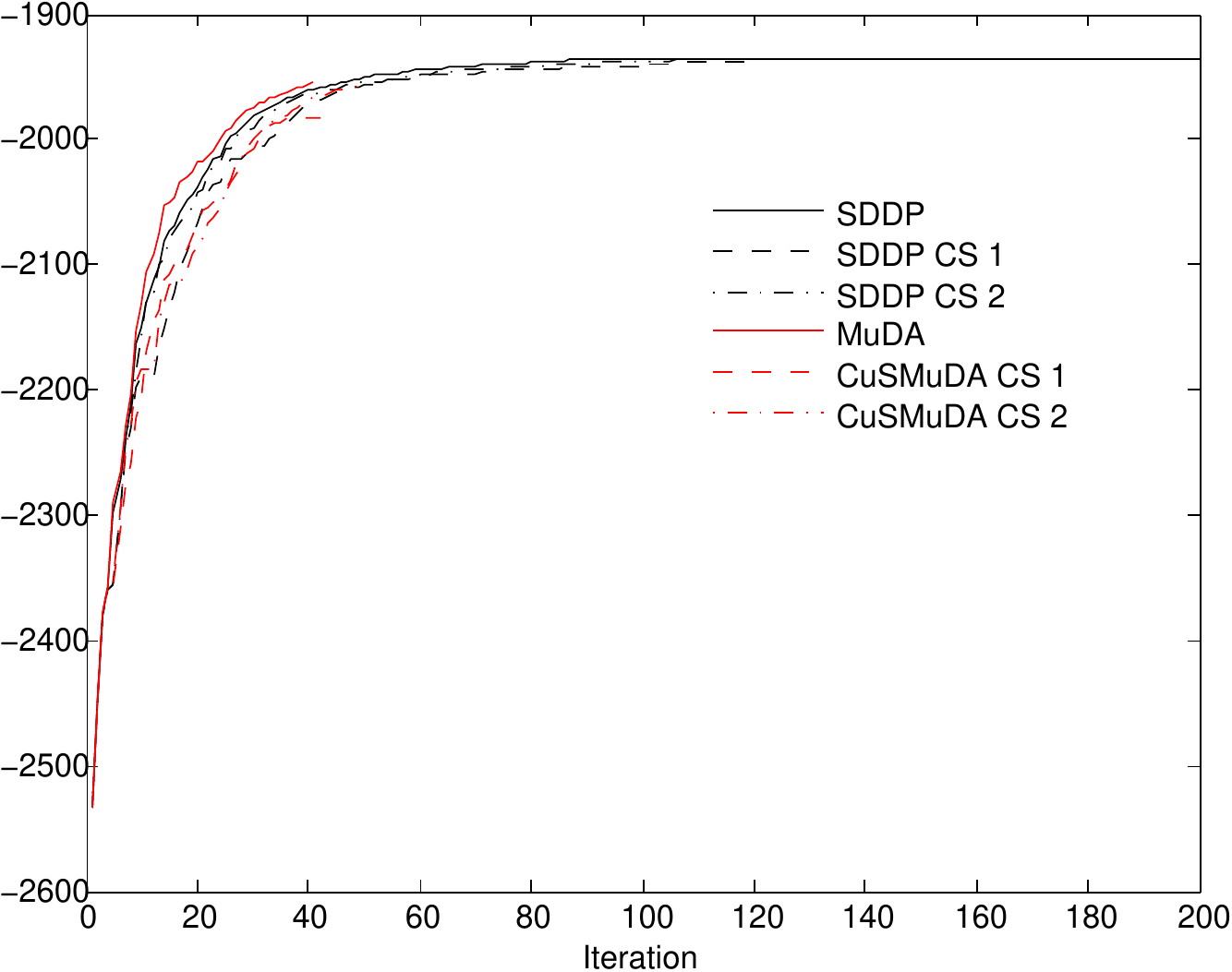}
\end{tabular}
\begin{tabular}{cc}
\includegraphics[scale=0.55]{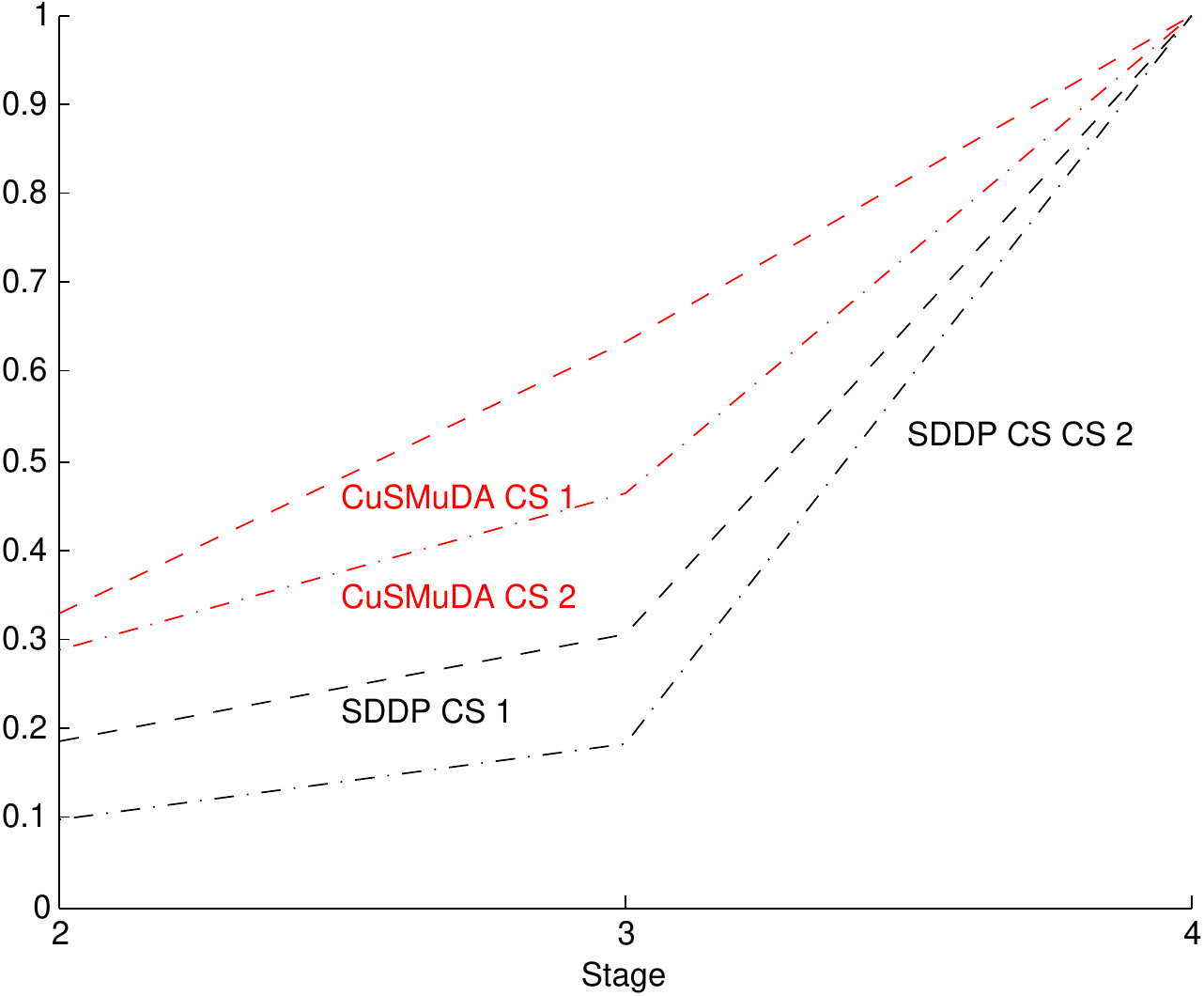}
\end{tabular}
\caption{Top left: total CPU time (in seconds) as a function of the 
number of  iterations for {\tt{SDDP}}, {\tt{SDDP CS 1}}, {\tt{SDDP CS 2}},
{\tt{MuDA}}, {\tt{CuSMuDA CS 1}}, and {\tt{CuSMuDA CS 2}}
to solve Instance 2.
Top right: evolution of the lower bounds $z_{\inf}^i$ along the iterations
of {\tt{SDDP}}, {\tt{SDDP CS 1}}, {\tt{SDDP CS 2}},
{\tt{MuDA}}, {\tt{CuSMuDA CS 1}}, and {\tt{CuSMuDA CS 2}}
to solve Instance 2.
Bottom: mean proportion of cuts (over the iterations of the algorithm) selected
for stages $t=2,3,T=4$, by {\tt{SDDP CS 1}}, {\tt{SDDP CS 2}}, {\tt{CuSMuDA CS 1}}, and {\tt{CuSMuDA CS 2}}
to solve Instance 2. \label{fig:f_2}}
\end{figure}

\begin{figure}[H]
\centering
\begin{tabular}{cc}
\includegraphics[scale=0.55]{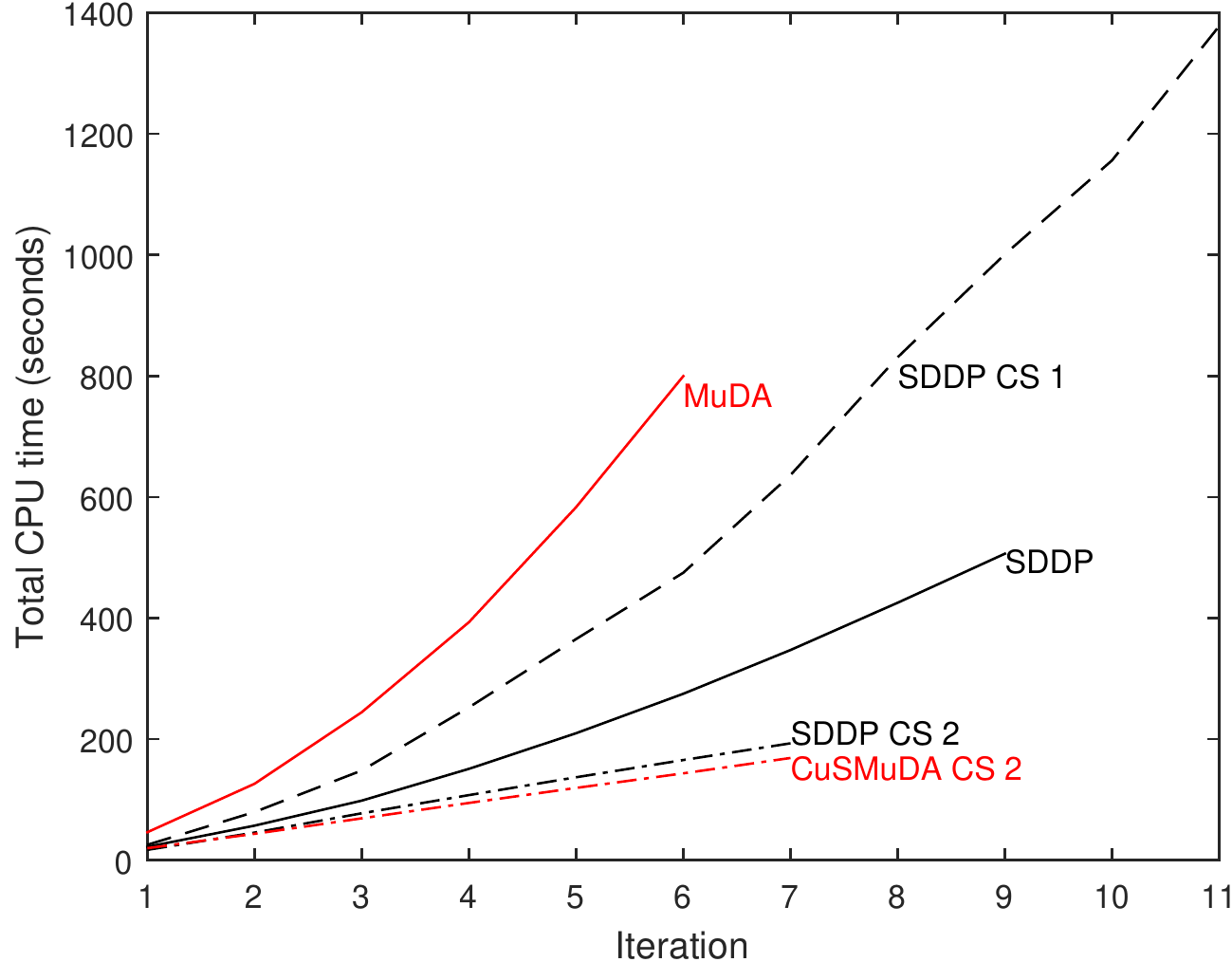}& 
\includegraphics[scale=0.55]{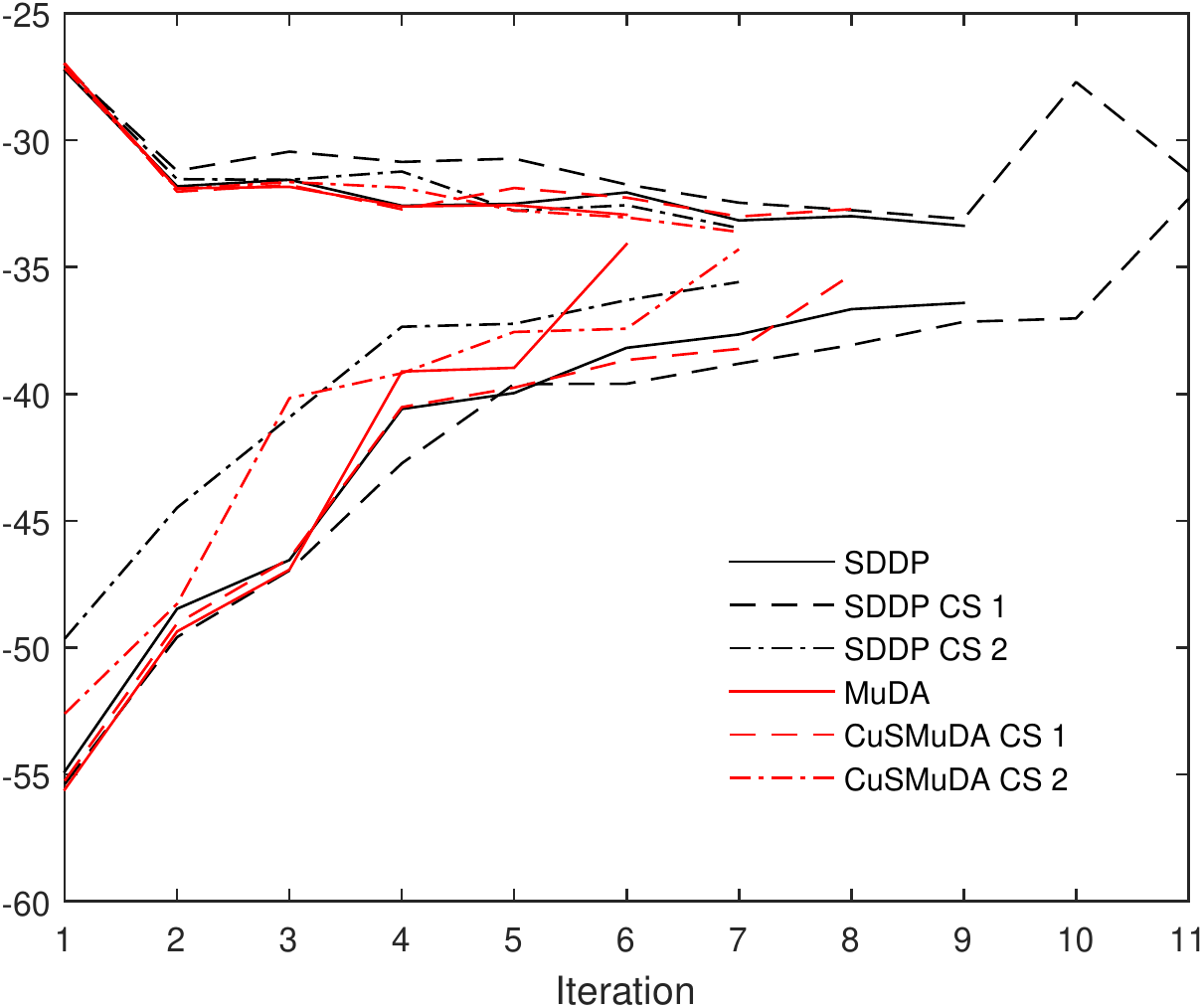}
\end{tabular}
\begin{tabular}{cc}
\includegraphics[scale=0.55]{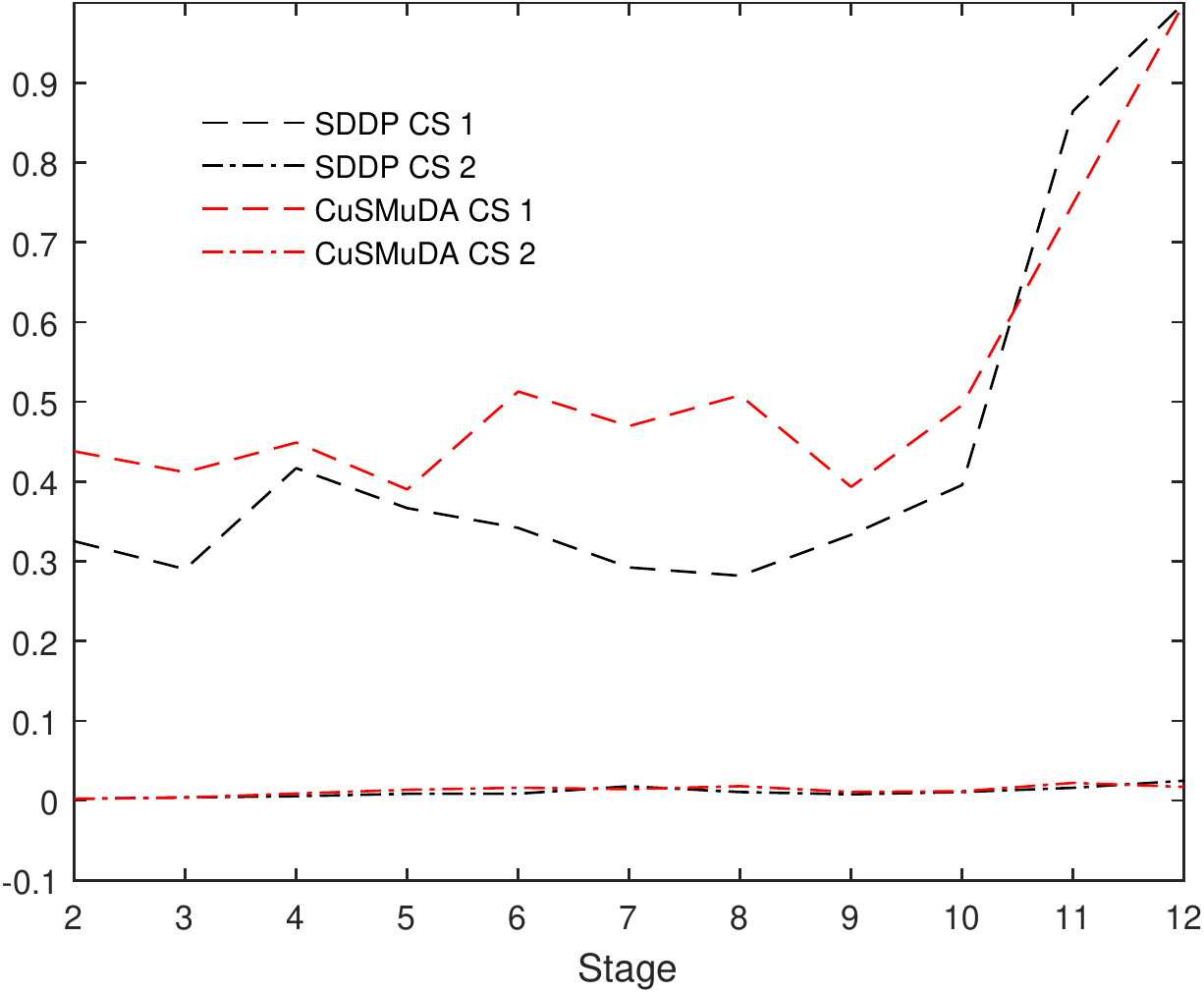}
\end{tabular}
\caption{Top left: total CPU time (in seconds) as a function of the 
number of  iterations for {\tt{SDDP}}, {\tt{SDDP CS 1}}, {\tt{SDDP CS 2}},
{\tt{MuDA}}, and {\tt{CuSMuDA CS 2}}
to solve Instance 3 (we did not represent the curve for {\tt{CuSMuDA CS 1}} since the total
CPU time with {\tt{CuSMuDA CS 1}} is much larger).
Top right: evolution of the upper bounds $z_{\sup}^i$ and lower bounds $z_{\inf}^i$ along the iterations
of {\tt{SDDP}}, {\tt{SDDP CS 1}}, {\tt{SDDP CS 2}},
{\tt{MuDA}}, {\tt{CuSMuDA CS 1}}, and {\tt{CuSMuDA CS 2}}
to solve Instance 3.
Bottom: mean proportion of cuts (over the iterations of the algorithm) selected
for stages $t=2,\ldots,T=12$, by {\tt{SDDP CS 1}}, {\tt{SDDP CS 2}}, {\tt{CuSMuDA CS 1}}, and {\tt{CuSMuDA CS 2}}
to solve Instance 3. \label{fig:lastfig}}
\end{figure}

On Figure \ref{fig:f_2}, we see that on Instance 2, {\tt{SDDP CS 1}}
and {\tt{SDDP CS 2}} select a small proportion of cuts for stages $2$ and $3$ (recall that this
is an instance with $T=4$ stages) and require, as {\tt{SDDP}}, 200 iterations (the evolution 
of the lower bound along iterations is similar for all 3 methods). Moreover, {\tt{SDDP CS 2}}
selects significantly less cuts than {\tt{SDDP CS 1}} for stages 2 and 3 and therefore is much 
quicker than {\tt{SDDP CS 1}}.
This means that a large number of cuts have the same value at some trial points and for each  such
trial point if {\tt{SDDP CS 1}} selects all these cuts {\tt{CuSMuDA CS 2}} only selects the oldest of these cuts.\\

\par Instance 3 is an example of a problem where LML 1 cut selection strategy selects a very small number of cuts 
for all stages whereas Level 1 selects many more cuts, both for MuDA and SDDP (see Figure \ref{fig:lastfig}). On top of that, 
{\tt{SDDP CS 1}} (resp. {\tt{CuSMuDA CS 1}}) needs more iterations than {\tt{SDDP}} and {\tt{SDDP CS 2}}
(resp. {\tt{MuDA}} and {\tt{CuSMuDA CS 2}}).
Therefore the CPU time needed to solve Instance 3 with {\tt{SDDP CS 1}} (resp. {\tt{CuSMuDA CS 1}})
is much larger than the CPU time needed to solve this instance with  
{\tt{SDDP}} and {\tt{SDDP CS 2}} (resp. {\tt{MuDA}} and {\tt{CuSMuDA CS 2}}).
This is an example of an instance where the time spent to select the cuts is not  compensated by the (small)
reduction in CPU time for solving the problems in the backward and forward passes.

\newpage

Summarizing our observations, 
\begin{itemize}
\item Pattern (P3) occurs when a variant with cut selection selects 
a large proportion of cuts and requires
too many iterations;
\item Patterns (P1) and (P2) occur when variants with cut selection select a small number of cuts
and do not need much more iterations than the variant without cut selection.
In this situation, (P1) occurs either when (i) both variants with cut selection select a similar number
of cuts or when (ii) one variant selects less cuts than the other but needs slightly more iterations.
\end{itemize}

One last comment is now in order. We have already observed that on all
experiments, Level 1 and Territory Algorithm cut selection strategies, i.e., {\tt{SDDP CS 1}} and {\tt{CuSMuDA CS 1}},
correctly select all cuts at the final stage. However, a crude implementation of the
Level 1 pseudo-code given in Figure \ref{figurecut1} resulted in 
the elimination of cuts at the final stage. This comes from the fact that approximate solutions of the
optimization subproblems are computed. Therefore two optimization problems could  compute the same cuts but the solver
may return two different (very close) approximate solutions. Similarly, a cut may be in theory the highest at some point
(for instance cut $\mathcal{C}_{T j}^k$ is, in theory, the highest at $x_{T-1}^k$) but numerically the value of another
cut at this point may be deemed slightly higher, because of numerical errors.
The remedy is to introduce a small error term $\varepsilon_0$ ($\varepsilon_0=10^{-6}$ in our experiments)
such that the values $V_1$ and $V_2$ of two cuts $\mathcal{C}_1$ and $\mathcal{C}_2$  at a trial point are considered equal if
$|V_2 -V_1| \leq \varepsilon_0 \max(1, |V_1|)$ while $\mathcal{C}_1$ is above $\mathcal{C}_2$ at this trial point if
$V_1 \geq  V_2 + \varepsilon_0 \max(1, |V_1|)$.
Therefore the pseudo-codes of Level 1 and LML 1 
given in Figure \ref{figurecut1} need to be modified. The corresponding pseudo-codes taking into account the approximation
errors are given in Figure \ref{figurecut2} in the Appendix.
Adding cuts that improve the approximation by more than some threshold $\varepsilon_0$ was also used in \cite{lohndorf}.

\subsection{Inventory management}

\subsubsection{Model} \label{sec:inventory}

We consider the $T$-stage inventory management problem given in (Shapiro et al. 2009).
For each stage $t=1,\ldots, T$, on the basis of the inventory
level $x_{t-1}$ at the beginning of period $t$, we have to decide on the quantity $y_t - x_{t-1}$ of a product
to buy so that the inventory level becomes $y_t$. 
Given demand $\xi_t$ for that product 
for stage $t$, the  inventory level is $x_{t}=y_t-\xi_t$ at the beginning of stage $t+1$.
The inventory level can become negative, in which case a backorder cost is paid.
If one is interested in minimizing the average cost over the optimization period,
we need to solve the following dynamic programming equations: for $t=1,\ldots,T$, defining
$\mathcal{Q}_t( x_{t-1} ) = \mathbb{E}_{\xi_t}[\mathfrak{Q}_t( x_{t-1}, \xi_t)]$, the stage $t$ problem is
$$
\mathfrak{Q}_t(x_{t-1}, \xi_t)=
\left\{ 
\begin{array}{l}
\inf c_t( y_t - x_{t-1} ) + b_t (\xi_t - y_t)_{+} + h_t (y_t - \xi_t)_{+} + \mathcal{Q}_{t+1}(x_{t})\\
x_{t} = y_t -\xi_t, y_t \geq x_{t-1},
\end{array}
\right.
$$
where $c_t$ is the unit buying cost, $h_t$ is the holding cost, and $b_t$ the backorder cost.
The optimal mean cost is $\mathcal{Q}_1( x_0 )$ where $x_0$ is the initial stock.

In what follows, we present the results of numerical simulations obtained solving this problem with
{\tt{SDDP}}, {\tt{SDDP CS 1}}, {\tt{SDDP CS 2}}, {\tt{MuDA}}, {\tt{CuSMuDA CS 1}}, and 
{\tt{CuSMuDA CS 2}} (we used the same notation as before to denote the solution methods).

\subsubsection{Numerical results}

We consider six values for the number of stages $T$ ($T \in \{5, 10, 15, 20, 25, 30\}$) and for fixed $T$, the following values of the
problem and algorithm parameters are taken:
\begin{itemize} 
\item $c_t=1.5+\cos(\frac{\pi t}{6}), b_t=2.8, h_t=0.2, M_t=M$ for all $t=1,\ldots,T$,
\item $x_0 =10$, $p_{t i}=\frac{1}{M}=\frac{1}{20}$ for all $t,i$,
\item $\xi_1= {\overline{\xi_1}}$ and $(\xi_{t 1},\xi_{t 2},\ldots,\xi_{t M})$ 
corresponds to a sample from the distribution of ${\overline{\xi_t}}(1+ 0.1\varepsilon_t)$ for i.i.d $\varepsilon_t \sim \mathcal{N}(0, 1)$
for $t=2,\ldots,T$, where ${\overline{\xi_t}}=5+0.5t$,
\item for the stopping criterion, in \eqref{formulazsup} $\alpha=0.025$
and $N=S=200$, and $\varepsilon=0.05$ in \eqref{stoppingcriterion}.\\
\end{itemize}

\par {\textbf{Checking the implementations.}}
As for the portfolio problem, for each value of $T \in \{5, 10, 15, 20, 25, 30\}$, we 
checked that upper and lower bounds computed by all 6 methods 
are very close for the three algorithms at the final iteration.\\

\par {\textbf{Computational time and proportion of cuts selected.}} 
Table \ref{tablerunningtime3} shows the CPU time needed to solve our six instances
of inventory management problems. For SDDP, all variants with cut selection yield important
reduction in CPU time and {\tt{SDDP CS 2}}, that uses LML 1, is by far the quickest
on 5 instances. For MuDA, Level 1 is not efficient (CPU time with {\tt{CuSMuDA CS 1}} is much larger than 
CPU time with  {\tt{MuDA}}) whereas LML 1 allows us to drastically reduce
CPU time. As for instances of the portfolio problem of type (P3), the fact that {\tt{CuSMuDA CS 1}}
is much slower than both {\tt{CuSMuDA CS 2}} and {\tt{MuDA}} is that it requires a similar number
of iterations and 
selects much more cuts
than {\tt{CuSMuDA CS 2}} which selects very few cuts for all stages, as can be seen on Figure \ref{fig:f_6} which represents the mean proportion of cuts selected
for {\tt{CuSMuDA CS 1}} and {\tt{CuSMuDA CS 2}} as a function of the number of stages for two instances.

\begin{table}
\centering
\begin{tabular}{|c||c|c|c|c|c|c|}
\hline
 & {\tt{SDDP}}  & {\tt{SDDP CS 1}}   &  {\tt{SDDP CS 2}} & {\tt{MuDA}} & {\tt{CuSMuDA CS 1}}  & {\tt{CuSMuDA CS 2}} \\
\hline
$T=5$ &  0.57   &  0.48  &  0.18  &  1.57  &  2.90  &  0.22   \\
\hline
$T=10$ & 5.9 &  1.22  & 1.42   &26.33 &   43.07   & 1.68  \\
\hline
$T=15$ &21.9  & 20.4   &  4.56  &106.52  &  178.08    & 8.07  \\
\hline
$T=20$ &28.6  &  31.2  &  9.94  & 158.10  &  245.24    & 19.43 \\
\hline
$T=25$ & 36.5& 62.8   &  9.61  & 189.27   &  405.58   & 22.64 \\
\hline
$T=30$ & 77.7 &  63.6  &  18.30  &363.86   & 575.53    & 56.62 \\
\hline
\end{tabular}
\caption{Computational time (in minutes) for solving instances of the inventory  problem of Section \ref{sec:inventory} 
for $M=20$ with {\tt{SDDP}}, {\tt{SDDP CS 1}}, {\tt{SDDP CS 2}}, {\tt{MuDA}}, {\tt{CuSMuDA CS 1}}, and {\tt{CuSMuDA CS 2}}.}
\label{tablerunningtime3}
\end{table}

\begin{figure}
\centering
\begin{tabular}{cc}
\includegraphics[scale=0.6]{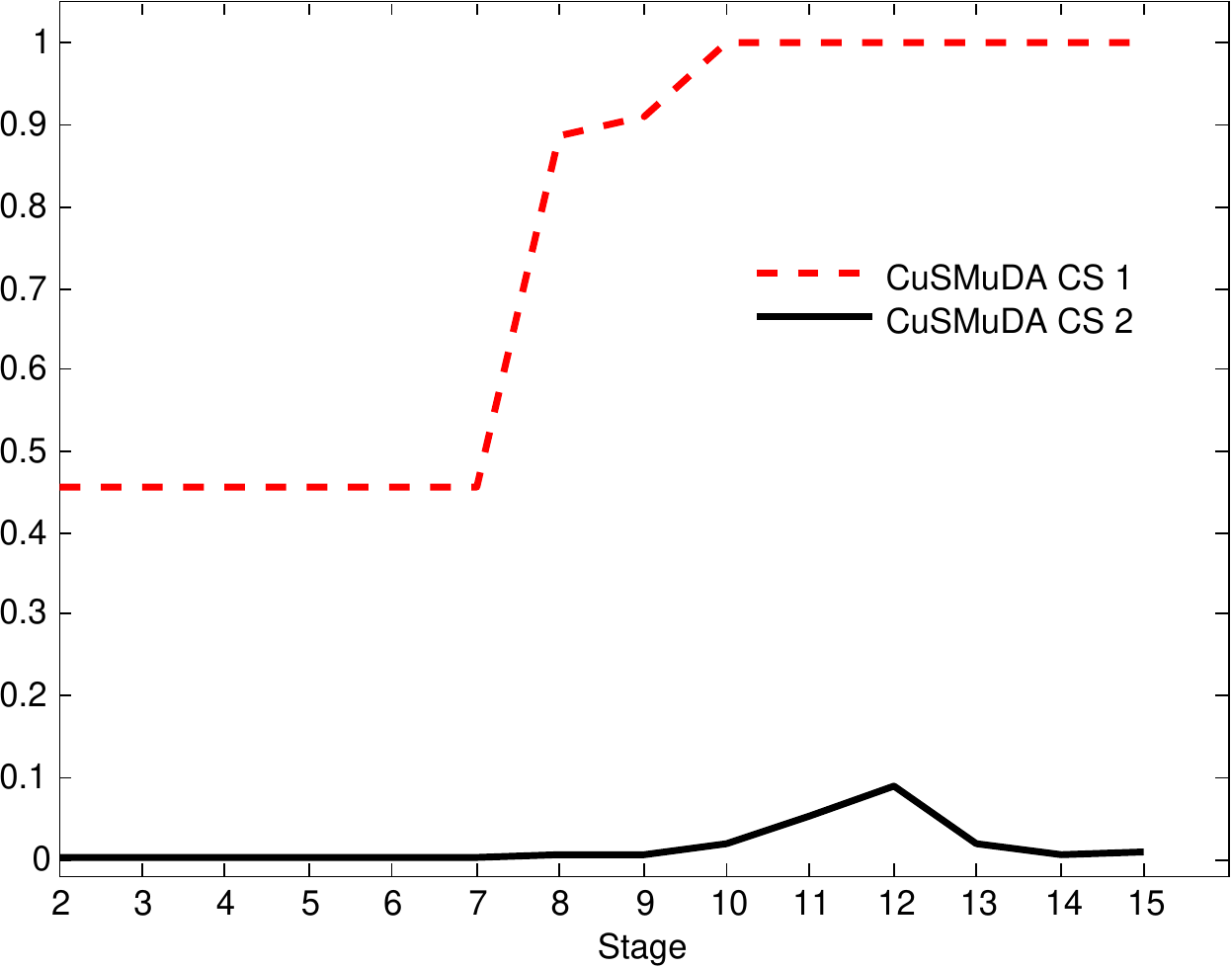}& \includegraphics[scale=0.56]{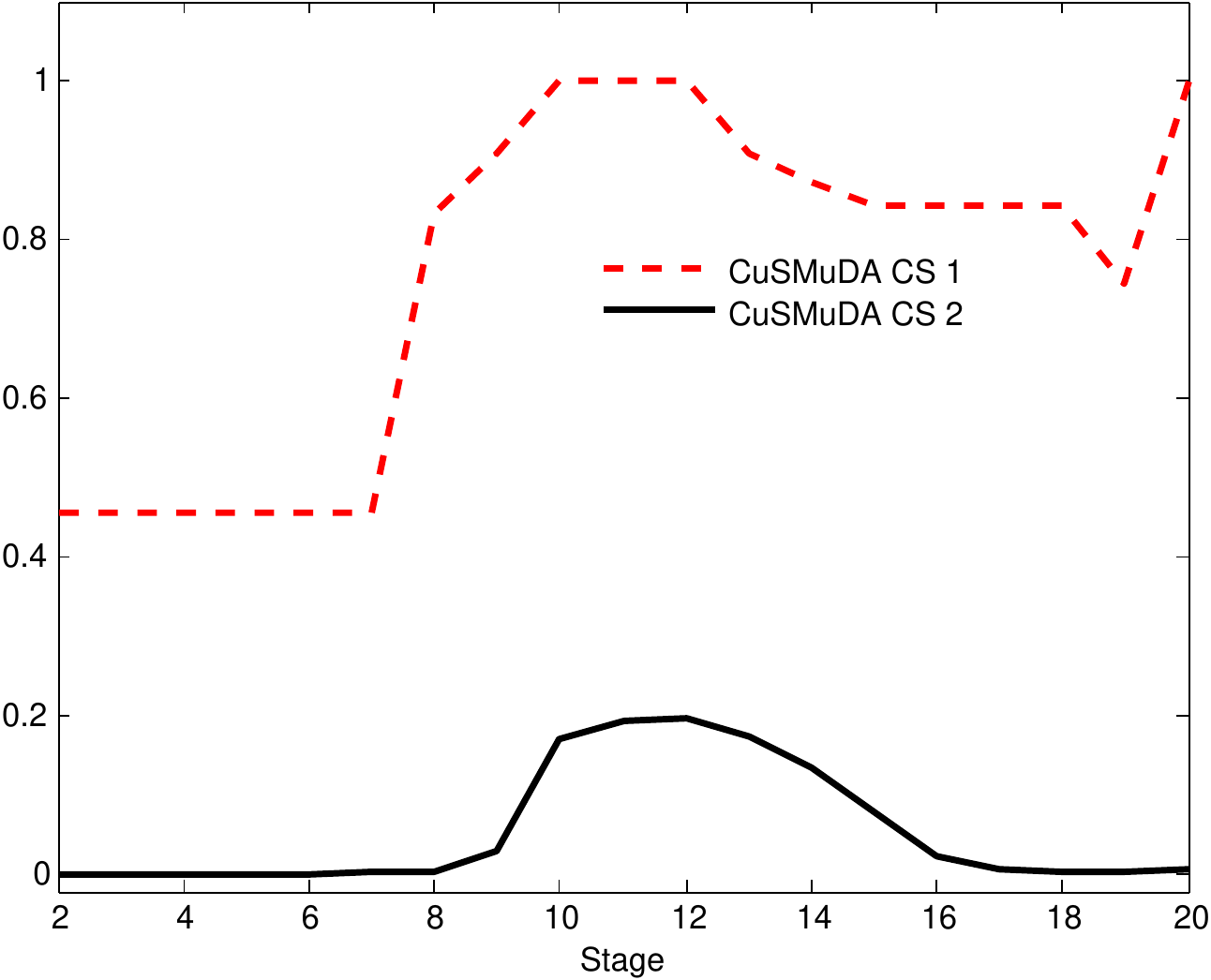} \\
{$T=15, M=20$} & {$T=20, M=20$} 
\end{tabular}
\caption{ \label{fig:f_6} Mean proportion of cuts (over the iterations of the algorithm) selected for stages $t=2,\ldots,T$ for 
{\tt{CuSMuDA CS 1}} and {\tt{CuSMuDA CS 2}} for the inventory problem for $M=20$.}
\end{figure}

\section{Conclusion}

We proposed CuSMuDA, a  combination of a class of cut selection strategies
with Multicut Decomposition algorithms to solve multistage stochastic linear programs.
We proved the almost sure convergence of the method in a finite number of iterations
and obtained as a by-product the  almost sure convergence in a finite number of iterations
of SDDP combined with this class
of cut selection strategies.

Numerical experiments on many instances of a portfolio and of an inventory problem have shown
that combining LML 1 cut selection with SDDP (resp. MuDA) 
allows us in general to reduce considerably the CPU time of SDDP (resp. MuDA).
There are, however, situations where a variant with selection is slower than its counterpart without cut selection.

It would be interesting to test CuSMuDA and the Limited Memory variant of Level 1 (both for MuDA and SDDP) on other types of stochastic programs and 
to extend the analysis
to nonlinear stochastic programs.

\section*{Acknowledgments} The second author's research was 
partially supported by an FGV grant, CNPq grants 307287/2013-0 and 401371/2014-0, and FAPERJ grant E-26/201.599/2014.
The authors wish to thank Vincent Lecl\`ere for helpful discussions.

\nocite{*}

\addcontentsline{toc}{section}{References}
\bibliographystyle{plain}
\bibliography{Bibliography}

\section*{Appendix}

\begin{figure}[H]
\begin{tabular}{|c|c|}
 \hline 
Level 1 & Limited Memory Level 1 \\
\hline
\begin{tabular}{l}
$I_{t j}^{k}=\{k\}$, $m_{t  j}^k =\mathcal{C}_{t j}^k ( x_{t-1}^k )$.\\
{\textbf{For}} $\ell=1,\ldots,k-1$,\\
\hspace*{0.3cm}{\textbf{If }}$\mathcal{C}_{t j}^k( x_{t-1}^{\ell} ) > m_{t j}^{\ell} + \varepsilon_0 \max(1, |m_{t j}^{\ell}| )$ \\
\hspace*{0.6cm}$I_{t j}^{\ell}=\{k\},\; m_{t j}^{\ell}=\mathcal{C}_{t j}^k( x_{t-1}^{\ell} )$\\
\hspace*{0.3cm}{\textbf{Else if}} $|\mathcal{C}_{t j}^k( x_{t-1}^{\ell} ) - m_{t j}^{\ell}| \leq \varepsilon_0 \max(1, |m_{t j}^{\ell}| )$ \\
\hspace*{0.6cm}$I_{t j}^{\ell}=I_{t j}^{\ell} \cup \{k\}$\\
\hspace*{0.3cm}{\textbf{End If}}\\
\hspace*{0.3cm}{\textbf{If }}$\mathcal{C}_{t j}^{\ell}( x_{t-1}^{k} ) > m_{t j}^k + \varepsilon_0 \max(1, |m_{t j}^{k}| )$ \\
\hspace*{0.6cm}$I_{t j}^k =\{\ell\},\; m_{t j}^k = \mathcal{C}_{t j}^{\ell}( x_{t-1}^{k} )$\\
\hspace*{0.3cm}{\textbf{Else if}} $|\mathcal{C}_{t j}^{\ell}( x_{t-1}^{k} ) - m_{t j}^{k}| \leq \varepsilon_0 \max(1, |m_{t j}^{k}| )$ \\
\hspace*{0.6cm}$I_{t j}^{k}=I_{t j}^{k} \cup \{\ell\}$\\
\hspace*{0.3cm}{\textbf{End If}}\\
{\textbf{End For}}\\
\end{tabular}
&
\begin{tabular}{l}
\vspace*{-0.42cm}\\
$I_{t j}^{k}=\{1\}$, $m_{t  j}^k =\mathcal{C}_{t j}^1 ( x_{t-1}^k )$.\\
{\textbf{For}} $\ell=1,\ldots,k-1$,\\
\hspace*{0.3cm}{\textbf{If }}$\mathcal{C}_{t j}^k( x_{t-1}^{\ell} ) > m_{t j}^{\ell} + \varepsilon_0 \max(1, |m_{t j}^{\ell}| )$ \\
\hspace*{0.6cm}$I_{t j}^{\ell}=\{k\},\; m_{t j}^{\ell}=\mathcal{C}_{t j}^k( x_{t-1}^{\ell} )$\\
\hspace*{0.3cm}{\textbf{End If}}\\
\vspace*{0.4cm}\\
\hspace*{0.3cm}{\textbf{If }}$\mathcal{C}_{t j}^{\ell +1}( x_{t-1}^{k} ) > m_{t j}^k + \varepsilon_0 \max(1, |m_{t j}^{k}| )$ \\
\hspace*{0.6cm}$I_{t j}^k =\{\ell +1\},\; m_{t j}^k = \mathcal{C}_{t j}^{\ell + 1}( x_{t-1}^{k} )$\\
\hspace*{0.3cm}{\textbf{End If}}\\
\vspace*{0.4cm}\\
{\textbf{End For}}\\
\end{tabular}
\\
\hline
\end{tabular}
\caption{Pseudo-codes for selecting the cuts using Level 1 and 
Limited Memory Level 1 taking into account approximation errors.}
\label{figurecut2}
\end{figure}

\end{document}